\g@addto@macro\bfseries{\boldmath}
\newcommand{\A}{\mathcal{A}}
\newcommand{\E}{\mathcal{E}}
\newcommand{\R}{\mathbb{R}}
\newcommand{\C}{\mathbb{C}}
\newcommand{\Z}{\mathbb{Z}}
\newcommand{\conj}[1]{\overline{#1}}
\newcommand{\B}{\mathcal{B}}
\newcommand{\Hil}{\mathcal{H}}
\newcommand{\Dy}{\mathcal{D}}
\newcommand{\bh}{\mathcal{B}(\Hil)}
\renewcommand\Re{\operatorname{Re}}
\renewcommand\Im{\operatorname{Im}}
\renewcommand{\Dy}{\mathcal{D}}
\renewcommand{\S}{\mathcal{S}}
\newcommand{\einsH}{\mathbf{1_{\Hil}}}
\newtheorem{thm}{Theorem}[section]
\newtheorem{lemma}[thm]{Lemma}
\newtheorem{cor}[thm]{Corollary}
\newtheorem{prop}[thm]{Proposition}
\theoremstyle{definition}
\newtheorem{remark}[thm]{Remark}
\newtheorem*{problem*}{Problem}
\begin{document}
\title{\textbf{Sparse Lerner operators in infinite dimensions}}
\author{Adem Limani \& Sandra Pott.}
\date{
}
\maketitle
%
%
\begin{abstract}
\noindent
We use the principle of almost orthogonality to give a new and simple proof that a sparse Lerner operator is bounded on a matrix- or operator-weighted space $L_W^{2}(\mu)$, where $\mu$ is a doubling measure on $\R^d$ if and only if the weight $W$ satisfies the Muckenhoupt $A_2(\mu)$-condition, restricted to the sparse collection in question. Our method extends to the infinite-dimensional setting, thus allowing for applications to the multi-parameter setting. For the class of Muckenhoupt $A_2$-weights, we obtain bounds in terms of mixed $A_{2}(\mu)$-$A_{\infty}(\mu)$-conditions, which is independent of dimension and agrees with the best known bound in the finite-dimensional vectorial setting. As an application, we prove a matrix-weighted bound for the maximal Bergman projection, where we obtain a new sharper bound in terms of the B\'ekoll\'e-Bonami characteristic. Furthermore, we consider commutators of sparse Lerner operators on operator-valued weighted $L^{2}$-spaces and some applications to multi-parameters.
\end{abstract}
%
%
\section{Introduction}
Let $\mu$ be a locally finite positive Borel measure on the Euclidean space $\R^d$. A countable collection $\Dy$ of cubes in $\R^d$ is said to form a \emph{dyadic grid}, if the following properties hold:
\begin{enumerate}
\item[(i)] Every cube $Q\in \Dy$ has sidelength $2^k$, for some integer $k\in \Z$.

\item[(ii)] Each subcollection $\Dy_k \subset \Dy$ consisting of cube with sidelenghts $2^k$ form a partition of $\R^d$.

\item[(iii)]For every pair $Q, Q' \in \Dy$ we have $Q \cap Q' \in \{ \emptyset, Q, Q' \}$.

\end{enumerate}
We shall naturally refer to the elements $Q$ of a dyadic grid $\Dy$ as dyadic cubes and recall the \textit{standard dyadic grid} $\Dy(\R^{d})$ given by
\[ \left\{2^{n}\left( [0,1) + m\right): m\in \mathbb{Z}^{d}, n\in \mathbb{Z} \right\}.
\]
We say that a subcollection $\S \subset \Dy$ is \emph{sparse} (wrt $\mu$) if for any $Q \in \Dy$:
\begin{equation}\label{spcond} 
\sum_{Q'\in \textbf{Ch}_{\S}(Q)} \mu (Q') \leq \frac{1}{2}   \mu(Q) 
\end{equation}
where $\textbf{Ch}_{\S}(Q)$ denotes the set of maximal (wrt inclusion) cubes in $\S$, which are strictly contained in $Q$. Again, there is nothing particular with the constant $1/2$ and it may be replaced by any fixed $0< \delta <1$. For a sparse collection $\S$, we consider the corresponding \emph{sparse operator} $T^{\S}$ by 
\begin{equation}\label{spOps}
T^{\S}(f)(x) = \sum_{Q\in \S} 1_{Q}(x) \langle f \rangle_{\mu, Q},
\end{equation}
where $\langle f\rangle_{\mu, Q} := \frac{1}{\mu(Q)} \int_{Q} f d\mu$ denotes the $\mu$-average of $f$. Sparse operators and its many variations have recently attracted much attention, due to the breakthrough in 2013 where A. Lerner proved his sparse domination theorem, which essentially asserts that general Calder\'on-Zygmund operators can be pointwise bounded by sparse operators \cite{Ler13}. This result sparked a considerable interest in obtaining sharp bounds for various operators, using sparse operators. For instance, the sparse domination theorem provided a straightforward proof of the $A_{2}$-conjecture for general Calder\'on-Zygmund operators, which was initially solved by T. Hyt\"onen in \cite{Hyt12Sh}, using rather technical tools. 

Our purpose here is to consider sparse operators in a vectorial setting. To this end, we shall denote by $\Hil$ a separable Hilbert space equipped with the inner-product $(\cdot | \cdot )_{\Hil}$, and let $\B(\Hil)$ denote the space of bounded linear operators on $\Hil$, equipped with the usual operator norm. We say that $W:\R^d \rightarrow \B(\Hil)$ is an operator-valued weight, if for every vector $e\in \Hil$, the function

\begin{equation} \label{Scweight}
w_{e}(x):= ( \, W(x) \, e \, | \, e\, )_{\Hil}
\end{equation}
is a usual scalar weight. In fact, for the sake of ensuring well-defined Bochner integrals, we shall require that the operator-valued weights $W^{\pm 1}$ are both weakly locally $\mu$-integrable on $\R^d$. That is, 
for any $u,v\in \mathcal{H}$, the function $x \mapsto \, \left( W^{\pm 1}(x) \, u \, \lvert \, v \right)_{\mathcal{H}}$ is integrable on compacts subsets of $\R^d$ with respect to $\mu$ and satisfies for any dyadic cube $Q\subset \R^d$

\begin{equation*}\label{opweights} 
\abs{ \, \int_{Q} \,\left(W^{\pm 1}(x)  u \lvert  v \right)_{\Hil} d\mu(x) }  \leq C_{\mu, Q}  \norm{u}_{\Hil} \, \norm{v}_{\Hil}, 
\end{equation*}
where $C_{\mu, Q}>0$ is a constant possibly depending on $\mu$ and $Q$. The bounded linear operators that arise in this way will be denoted by $\int_Q, W^{\pm 1}  d\mu  $ and they are also automatically invertible, for every dyadic cube $Q\subset \R^d$. For a subcollection $\S \subseteq \Dy$, we say that an operator-valued weight $W: \R^d \to \bh$ is said to be a $\A^\S_2(\mu)$-weight, if 

\begin{equation}\label{A2cond}\left[W \right]_{\A^\S_2(\mu)} \, := \, \sup_{Q \in \S} \, \, \norm{\langle W \rangle^{1/2}_{\mu, Q} \, \langle W^{-1} \rangle^{1/2}_{\mu, Q}}^2_{\bh} < \, \infty.
\end{equation}
Note that if $\S= \Dy$, then we retain the collection of dyadic Muckenhoupt $A_{2}$-weights wrt $\mu$, denoted by $\A_2(\mu)$. Another important class of weights for our purposes, will be the collection of operator-valued dyadic $A_{\infty}$-weights. An operator-valued weight $W$ is said to belong to $\A_{\infty}(\mu)$, if for every $ e\in \Hil$ the scalar weights $w_{e}$ in \eqref{Scweight} satisfy the dyadic Fujii-Wilson $A_{\infty}(\mu)$-condition:
\begin{equation}\label{sAinfty} 
 \left[ w_e \right]_{A_{\infty}(\mu)} := \sup_{Q\in \Dy} \frac{1}{w_e(Q)} \int_{Q} M_\mu^{\Dy}(1_{Q}w_e)(x) d\mu(x) < \infty,
\end{equation}
where $M^{\Dy}_\mu(f)(x) := \sup_{Q\in \Dy} 1_{Q}(x) \langle f\rangle_{\mu, Q}$ denotes the dyadic maximal function. Due to the scale-invariance of the Fujii-Wilson condition in \eqref{sAinfty}, we conventionally set the dyadic $\A_{\infty}(\mu)$-constant to be
\[ \left[W\right]_{\A_{\infty}(\mu)} := \sup_{e\in \Hil } \, [w_{e}]_{A_{\infty}(\mu)} < \infty.
\] 
The dyadic $\A_\infty(\mu)$-condition is slightly weaker than the dyadic $\A_2(\mu)$-condition and one can show that $ \left[ W \right]_{\A_{\infty}(\mu)} \leq e \left[ W \right]_{\A_2(\mu)}$ (for instance, see \cite{Hyt17}). Given an operator-valued weight $W$, we denote by $L^2 _W := L^{2}_{W}\left(\mu , \Hil \right)$ the space of weakly locally $\mu$-integrable functions on $\R^d$, equipped with the norm

\[ \norm{f}^{2}_{L^{2}_{W}}  := \, \int_{\R^d} \| W^{1/2}f \|^{2}_{\Hil} d\mu = \int_{\R^d} \, \left(W  f \lvert  f \right)_{\Hil} d\mu   <  \infty.
\]
Denoting by $L^{\infty}_{0}(\mu)$ the space of complex-valued $\mu$-essentially bounded functions with compact support on $\R^d$, it is not difficult to show that
\[ L^{\infty}_{0}(\mu) \, \otimes \Hil   := \, \left\{\, \sum_{\text{Finite}} \, f \otimes e \, : \, e \in \Hil \, , \, f \in L^{\infty}_{0}(\mu) \right\}
\] 
forms a dense subspace of $L^{2}_{W}$, thus given a linear operator $T$ well-defined on scalar-valued functions $L^{\infty}_{0}(\mu)$, we denote the canonical $\Hil$-valued extension of $T$ by $T \otimes \mathds{1}$, defined on $L^{\infty}_{0} \, \otimes \Hil $ via

\[ \left( T \otimes \mathds{1} \right) \left( \sum_{\text{Finite}}  f \otimes e \right)  :=  \sum_{\text{Finite}} \, T(f)\otimes e, 
\]
where $\mathds{1}$ denotes the identity operator on $\bh$. If $T: L^{\infty}_{0}(\mu) \, \otimes \Hil \to L^2_W$ is bounded, then it follows by density that $T \otimes \mathds{1}$ will have a unique bounded extension to all of $L^2_W$, thus for the sake of abbreviation, we shall denote by $T \otimes \mathds{1}$ the unique continuous extension.

In the setting of matrix-weights $W$ of dimension $N>1$ and $\mu$ being the Lebesgue measure on $\R^d$, the following mixed $A_{2}$-$A_{\infty}$ bound was proved for general Calder\'on-Zygmund operators $T$ in \cite{NPTV17},
\begin{equation}\label{NPTVbdd}
 \| T \otimes \mathds{1_p} \|_{L^{2}_{W} \rightarrow L^{2}_{W}} \leq c_{d,N, T}  \left[ W \right]^{1/2}_{\A_2} \left[ W \right]^{1/2}_{\A_\infty } \left[ W^{-1} \right]^{1/2}_{\A_\infty}
\end{equation}
where $c_{d,N,T}>0$ is a constant depending on the dimensions and $T$. The authors introduced the technique of so-called convex body domination with sparse operators, extending the of the sparse domination technique in \cite{Ler13} by A. Lerner. In fact, the mixed bound in \eqref{NPTVbdd} is a consequence of the bound for sparse operators. Even in the scalar setting, the convex body domination technique gives new results, see e.g. \cite{isralowitz2020commutators}. The proof in \cite{NPTV17} of the convex body domination theorem heavily relies on the John-Ellipsoid theorem and equivalence of norms, tools which are both absent tools in the infinite dimensional setting. In fact, a consequence of our results is that a pointwise domination of Calder\'on-Zygmund operators by sparse operators is in general not possible in the infinite-dimensional setting. Indeed, it was proved in \cite{GPTV04}, \cite{GPTV01} that the Hilbert transform and the dyadic martingale transforms do not in general extend to a bounded linear operator in the operator-valued infinite dimensional setting, even if $W$ is a Muckenhoupt $A_2$-weight. These results relied, among others, on observations by F. Nazarov, S. Treil and A. Volberg in 1997, where they proved that the Carleson embedding theorem fails in the infinite dimensional setting \cite{NTV97}. The first positive result extending weighted boundedness results to an infinite-dimensional, operator-weighted setting was established by A. Aleman and O. Constantin in \cite{AleCon12}, where they proved that the the family of standard weighted Bergman projections are bounded on $L^{2}_{W}$, if and only if the operator-valued weight $W$ satisfies a standard weighted Bekoll\'e-Bonami condition. Sharp bounds for the standard weighted Bergman projection in the scalar-valued setting were proved by M. C. Reguera and the second author in \cite{PePo13}, using uniform (as opposed to pointwise) domination by certain sparse operators. More recently, a sparse domination of the Bergman projection on pseudoconvex domains in the matrix-weighted finite dimensional setting was obtained, where the authors in \cite{huo2020weighted} provided a slight improvement of the bound in \cite{AleCon12}. Our main purpose is to show that a certain family of sparse operators are bounded in the infinite-dimensional, operator-weighted setting of $L^2_W(\mu, \Hil)$, if and only if $W$ satisfies an appropriate $\mu$-adapted Muckenhoupt $A_2$-condition. In particular, we shall in our setting prove a similar bound to that of \eqref{NPTVbdd}, which makes this the best known bound to date, even in the finite dimensional setting. While the sparse domination of the Bergman projection in the infinite dimensional setting unfortunately still remains a mystery, we shall as an application of our results, provide a matrix-weighted, finite-dimensional bound of the maximal Bergman projection on $L^2_W$, which improves the bound obtained in \cite{AleCon12} and \cite{huo2020weighted}.
%
%
\section{Main results and outline}
\label{Mainsec} 
For a sparse family $\S$ of dyadic cubes on $\R^d$, we shall consider the family of \emph{sparse Lerner operators} $\{T^{\S}_{\psi,\varphi}\}_{\psi, \varphi}$ defined by 
\begin{equation}\label{Lernerops}
T^\S_{\psi,\varphi} = \sum_{Q \in \S} \frac{1}{\mu(Q)} \left( \psi_Q \otimes \varphi_Q \right)_{\mu}
\end{equation}
where $\frac{1}{\mu(Q)} \left( \psi_Q \otimes \varphi_Q \right)_{\mu}(f)(x) = \psi_Q(x) \langle \varphi_Q f \rangle_{\mu, Q}$ denotes the kernel representation and $\psi_{Q}, \varphi_Q$ are complex-valued functions supported on $Q$ with $\norm{\psi_Q}_{L^\infty(\mu)} \leq 1, \norm{\varphi_Q}_{L^\infty(\mu)} \leq 1$. We shall later see that this family of sparse Lerner operators naturally appear as convex bodies of sparse operators. We now state our first main result.

\begin{thm}\thlabel{SpL2W}

The family of sparse operators $\{T^{\S}_{\psi,\varphi}\}_{\psi, \varphi}$ defined in \eqref{Lernerops} extend to bounded linear operators on $L^2_W(\mu , \Hil)$ if and only if $W$ belongs to $\A^\S_2(\mu)$. Moreover, there exists an absolute constant $C >0$, such that
\begin{equation}\label{A2sp}
 \frac{1}{C} \left[ W \right]^{1/2}_{\A^\S_2(\mu)} \leq \sup_{\psi,\varphi} \| T^{\S}_{\psi,\varphi} \otimes \mathds{1} \|_{L^{2}_{W} \rightarrow L^{2}_{W}} \leq  C [ W ]^{3/2}_{\A^\S_2(\mu)},
\end{equation}
where the supremum is taken over all sequences $\{\psi_Q\}_{Q\in \S}, \{\varphi_Q \}_{Q \in \S}$ defined in the previous paragraph. 
\end{thm}
If we assume that the locally finite positive Borel measure $\mu$ on $\R^d$ satisfies the doubling condition:
\begin{equation}\label{Doublingcond}
K_\mu := \sup_{\substack{Q \subset \R^d \\ \text{cube} }} \frac{\mu(2Q)}{\mu(Q)} < \infty
\end{equation}
where $2Q$ denotes the dilate of a cube $Q$ in $\R^d$ by a factor $2$, then we actually obtain the following mixed-bound identical to \eqref{NPTVbdd}, for operator-valued dyadic Muckenhoupt weights.

\begin{cor} \thlabel{MixbddCor}
If $\mu$ satisfies the doubling condition in \eqref{Doublingcond} and $W$ is a dyadic $\A_2(\mu)$-weight, then we have the following improved mixed-bound:
\begin{equation}\label{A2Muck} 
\sup_{\psi, \varphi, \S} \| T^{\S}_{\psi, \varphi} \otimes \mathds{1} \|_{L^{2}_{W} \rightarrow L^{2}_{W}} \leq C_{\mu} \left[W \right]^{1/2}_{\A_2(\mu)} \left[W \right]^{1/2}_{\A_\infty(\mu)}  \left[W^{-1} \right]^{1/2}_{\A_\infty(\mu)}
\end{equation}
for some constant $C_{\mu}>0$, only depending on $\mu$. Note that we also take the supremum over all sparse collections $\S \subset \Dy$ of dyadic cubes in \eqref{spcond}.
\end{cor}

Some comments are now in order. The proof of \thref{SpL2W} relies on a stopping time argument, which allows us to decompose any sparse Lerner operator into a sum of simpler operators, so that the principle of almost orthogonality by M. Cotlar and E. Stein applies. We note that no self-improvement assumption on the weights $W$ are required, and in contrast to previous dimensional dependent proofs, our method extends to infinite dimensions. This in turn allows for certain applications to B\'ekoll\'e-Bonami weights and to multi-parameter settings, as we shall see in Section \ref{Bergsec} and Section \ref{AppSec}, respectively. The proof of \thref{MixbddCor} is similar to \thref{SpL2W}, but the improved bound hinges on a sharp reverse H\"older inequality on homogeneous type spaces (see Theorem 1.1, \cite{hytonen2012sharp}), which requires the doubling condition on the measures $\mu$. We stress the fact that even though the operator-weighted Hilbert transform is in general unbounded in an infinite-dimensional setting, regardless whether the operator-valued Muckenhoupt $A_2$-condition holds, we show here that sparse Lerner operators are bounded in this setting. This implies in particular that a pointwise domination of the Hilbert transform by sparse Lerner operators is not possible in the infinite-dimensional setting. 

Although, we have not managed to improve the Aleman-Constantin result in infinite dimensions, we shall in the finite dimensional setting $\Hil \cong \C^N$ provide a convex body domination result and apply \thref{SpL2W} to establish a matrix-weighted improved bound. We shall consider the family of Bergman projections $P^+_\gamma$ with $\gamma >-1$, defined on the upper half-plane $\C_+ := \{ z \in \C: \Im(z) >0 \}$ via
\begin{equation}\label{BergProj}
    P_\gamma (f)(z) = \int_{\C_+} \frac{f(\xi)}{(z-\conj{\xi})^{2+ \gamma} }dA_\gamma(\xi) \qquad z \in \C_+
\end{equation}
where $dA_\gamma (\xi) = \Im(\xi)^{\gamma}dA(\xi)$. These are projections onto the subspace of analytic functions in $L^2(dA_\gamma, \C)$. However, analyticity will play no role in our considerations, hence we shall also consider the family of maximal Bergman projections 
\begin{equation}\label{MBergProj}
P_\gamma ^+(f)(z) = \int_{\C_+} \frac{f(\xi)}{|z-\conj{\xi}|^{2+ \gamma} }dA_\gamma(\xi) \qquad z \in \C_+.
\end{equation}
In this setting, the relevant class of matrix-weights $W: \C_+ \to \B(\C^N)$ will be the so-called B\'ekoll\'e-Bonami weights $B_2( \gamma)$ on $\C_+$, defined by
\[
[W ]_{B_2( \gamma)} := \sup_{\substack{ J \subset \R \\ \text{interval}}} \norm{ \langle W \rangle_{Q_J, \gamma}^{1/2} \langle W^{-1} \rangle^{1/2}_{Q_J, \gamma} }_{\mathcal{B}(\C^N)} < \infty
\]
where $\langle f \rangle_{Q_J, \gamma} := \frac{1}{A_\gamma(Q_J)} \int_{Q_J} f dA_\gamma$ and $Q_J := \{ z \in \C_+ : \Re(z) \in J, \, \Im(z) \in (0, |J|] \}$ denotes the Carleson square associated to the interval $J \subset \R$. A striking result in \cite{AleCon12} says that both the operators $P^{(+)}_\gamma$ are bounded on $L^2_W(dA_\gamma, \Hil)$ if and only if the operator-valued weight $W$ belongs to $B_2(\gamma)$, moreover there exists a constant $C_\gamma>0$, only depending on $\gamma$ and independent of the dimension of $\Hil$, such that
\[
\frac{1}{C_\gamma} [W]^{1/2}_{B_2(\gamma)} \leq \norm{P^{(+)}_\gamma \otimes \mathds{1} }_{L^2_W \to L^2_W} \leq C_\gamma [W]^{5/2}_{B_2(\gamma)}.
\]
The upper bound was later improved in \cite{huo2020weighted} by reducing the exponent of $5/2$ to $2$, in the matrix-weighted setting on pseudo-convex domains. The content of our next result is to provide, in the setting of matrix-weights, an upper bound, which sharpens both of these results for the family of (maximal) Bergman projections defined above.

\begin{thm} \thlabel{BergL2W}
For every $\gamma >-1$, the (maximal) Bergman projection $P^{(+)}_\gamma$ is bounded on $L^2_W(dA_\gamma, \C^N)$ with
\[
\norm{P^{(+)}_\gamma \otimes \mathds{1} }_{L^2_W \to L^2_W} \leq C_{ \gamma , N} [W]^{3/2}_{B_2(\gamma)}.
\]
\end{thm}

As indicated, the proof of \thref{BergL2W} relies on \thref{SpL2W} and a convex body domination result (see \thref{CBDprop}), which we defer to Section \ref{Bergsec}. There we shall also mention the main obstacle for extending our result to infinite dimension. Due to the general nature of \thref{SpL2W}, it does not rely on any reverse H\"older property of the weight $W$, which makes it particularly useful in the setting of B\'ekoll\'e-Bonami weights. This was essentially the main obstruction in the previously treatments, which provided cruder bounds for the (maximal) Bergman projections. With these perspectives in mind, it should not come as a surprise if \thref{BergL2W} extends to even more general settings, with regards to the domain. 

This manuscript is organized as follows. In the preliminary section \ref{PrelSec}, we have collected the preparatory work for our main results, \thref{SpL2W}, \thref{MixbddCor} and \thref{BergL2W}. It is divided into subsections, consisting of decomposition of sparse collections into stopping times, sharp estimates for scalar weights. Section \ref{MResSec} is devoted to the proof of \thref{SpL2W} and \thref{MixbddCor}, while section \ref{Bergsec} contains the the proof of \thref{BergL2W}. In our final section, we provide some applications to boundedness results for commutators of sparse Lerner operators and their iterated versions in the multi-parameter setting. 

%
%

\section{Preliminary results and notations} 
\label{PrelSec}
%
%
\subsection{The sparseness condition} 
Here we shall briefly discuss a more conventional notion of sparseness and justify our seemingly stronger choice in \eqref{spcond}. Given a number $0<\delta <1$, we say that a subcollection $\mathcal{F} \subset \Dy$ is \emph{weakly $\delta$-sparse} wrt $\mu$, if for every $Q\in \mathcal{F}$ there exists Borel sets $E_{Q}\subset Q$ with the properties that $\mu(E_{Q}) \geq \delta \mu(Q)$, and such that the collection $\left\{E_{Q}\right\}_{Q\in \mathcal{F}}$ is pairwise disjoint. Evidently, every sparse collection is weakly $1/2$-sparse with $E_Q := Q \setminus \cup_{Q'\in \textbf{Ch}_{\S}(Q)} Q'$. Conversely, if $\mathcal{F}$ is a weakly $\delta$-sparse collection, then 
\[
\sum_{Q' \in \textbf{Ch}_{\mathcal{F}}(Q)} \mu (Q') \leq \frac{1}{\delta} \left( \sum_{Q' \in \textbf{Ch}_{\mathcal{F}}(Q) \cup \{Q\} } \mu(E_{Q'}) \right) - \mu(Q) \leq \left(\frac{1}{\delta} -1\right) \mu(Q).
\]
However, the constant $(1/\delta -1)$ may still exceed $1$, hence to remedy this, we pick an integer $m\geq 2$ with $(1/\delta - 1)/m \leq 1/2$. Adapting the techniques from Lemma 6.6 in \cite{LerNaz19}, we can decompose any weakly $\delta$-sparse collection $\mathcal{F}$ into a union of $m\geq 2$ disjoint sparse subcollections $\S_1,\dots,\S_m$ in the sense of \eqref{spcond}. Consequently, any weakly $\delta$-sparse Lerner operator can be written as a sum of $m$ sparse Lerner operators, thus our main results continue to hold for weakly sparse Lerner operators, at the cost of $\delta$-dependent constants. For our purposes and for the sake of convenience, we shall restrict our attention to sparse collections $\S$ in the sense of \eqref{spcond}.

\subsection{The principle of almost orthogonality via stopping times} 
\label{PAOsec}

In this section, we shall decompose any sparse collection of dyadic cubes into union of stopping times, by identifying collections of dyadic cubes with collections of vertices of graphs. These notions are deeply inspired by ideas from \cite{LerNaz19}, which we refer the reader to further details on these matters. With this decomposition at hand, we shall see that any sparse operator can be written as a sum of sparse operators, for which the principle of almost orthogonality by Cotlar and Stein can be utilized. 

Given a sparse collection $\S$, we view the cubes of $\S \subset \Dy$ as a set of vertices of a graph $\Gamma_{\S}$ by declaring that two distinct cubes $Q,\, Q^{'}\in \S$ are joined by a graph edge if either $Q\subset Q^{'}$ or $Q^{'}\subset Q$ and there is no intermediate cube $Q^{''}\in \S$, which lies strictly between $Q$ and $Q^{'}$. We say that any two cubes $Q,\,Q^{'}\in \S$ are connected if there is a path of graph edges between them. With this at hand, we can define $d_{\S}(Q',Q)$ to be the minimal number of graph edges from $Q'$ to $Q$ (if two cubes $Q',Q\in \S$ are not connected, we set $d_{\S}(Q',Q)=\infty$ by default). Connectedness of cubes in $\S$ induces an equivalence relation on $\Gamma_{\S}$, hence $\Gamma_{\S}$ decomposes into a collection of at most finitely many connected subgraphs. Each connected subgraph of $\Gamma_{\S}$ can be viewed as a branching tree, with the natural motions of either moving upwards to larger cubes within the subgraph or moving downwards to smaller cubes within the subgraph. Now pick exactly one vertex in each connected subgraph of $\Gamma_{\S}$ and denote the collection of the cubes corresponding to these vertices by $\mathcal{J}^{0}\subset \S$. Recursively, we may define the stopping times  

\begin{equation*}\mathcal{J}^{n+1} = \, \bigcup_{Q \in \mathcal{J}^{n}}\, \textbf{Ch}_{\S}(Q) \qquad  \qquad \mathcal{J}^{-(n+1)} = \, \bigcup_{Q\in \mathcal{J}^{-n} } \, \textbf{Pr}_{\S}(Q) \qquad n\geq 0.
\end{equation*}
\noindent
Here $\textbf{Ch}_{\S}(Q)$ denotes the maximal cubes in $\S$ which are strictly contained in $Q$ and $\textbf{Pr}_{\S}(Q)$ denotes the minimal cube in $\S$, which strictly includes $Q$. For positive integers $n$, $\mathcal{J}^{n}$ corresponds to moving down $n$ generations in all the connected subgraphs of $\S$, from every cube in $\mathcal{J}^{0}$, while for negative integers $n$, it corresponds to moving up $n$ generations in all the connected subgraphs of $\S$, from every cube in $\mathcal{J}^{0}$. In a similar way, we define the $n$-generation stopping time relative to an arbitrary cube $Q\in \S$ by

\begin{equation*} \mathcal{J}^{n}(Q):= \left\{Q'\in \S: d_{\S}(Q',Q) = n \,, \,Q' \subsetneq Q \right\} \qquad \, n\geq 0.
\end{equation*}
With these constructions at hand, we obtain a collection of families $\left\{\mathcal{J}^{n}\right\}_{n\in \mathbb{Z}}$, satisfying the following properties;

\begin{itemize}
\item[(i)] $\mathcal{J}^{n}$ is a disjoint collection of cubes in $\S$, for all $n\in \mathbb{Z}$. 
\item[(ii)] $\bigcup_{n\in \mathbb{Z}} \, \mathcal{J}^{n} \, =\, \S$.
\item[(iii)] For every $Q\in \S$, the collection $\left\{\mathcal{J}^{n}(Q) \right\}_{n=0}^{\infty}$ forms a \emph{decaying stopping time family}. That is, for every $Q\in \S$, we have

\begin{equation}\label{DSTn}\sum_{Q'\in \mathcal{J}^{n}(Q)} \, \mu(Q') \, \leq \, 2^{-n} \, \mu(Q) \qquad \, n\geq 0.
\end{equation} 
\end{itemize}
\noindent
These properties are all immediate consequences of the constructions of $\mathcal{J}^{n}$, while the third property incorporates an iteration of the sparseness condition of $\S$. Decomposing the sparse family $\S$ in this way, we may express any sparse operator as

\begin{equation*}\label{trunc} T^{\S}_{\psi, \varphi} = \ \sum_{n\in \mathbb{Z}} \, T_{n},
\end{equation*} 
where each term consist of sums of averaging operators restricted to a disjoint family $\mathcal{J}^{n}$, given by

\begin{equation}\label{aoterm}T_{n}:= \sum_{Q\in \mathcal{J}^{n}} \, \frac{1}{\mu(Q)} \, \left( \psi_{Q} \otimes \varphi_{Q} \right)_\mu \qquad \, n\in \mathbb{Z}.
\end{equation}
It turns out that the decaying stopping time property in ($\ref{DSTn}$) makes the family of operators $\left\{T_{n}\right\}_{n\in\mathbb{Z}}$ in $(\ref{aoterm})$ "almost orthogonal". In order to make the notion of almost orthogonality more precise, we will need the following tailor made version of the Cotlar-Stein lemma.

\begin{lemma}[Cotlar-Stein type lemma]\thlabel{gencotlar} 

Let $\left\{T_{n}\right\}_{n\in \mathbb{Z}}$ be a sequence of bounded linear operators on a Hilbert space $\mathcal{H}$ and suppose there are sequences of positive real numbers $\left\{\alpha(n)\right\}_{n\in \mathbb{Z}}$, $\left\{\beta(n)\right\}_{n\in \mathbb{Z}}$, with the properties 

\begin{equation}\label{Cotest}  
\begin{split}
\norm{T_{n}^{*} \, T_{m}}_{\mathcal{B}(\mathcal{H})} \, \leq \, \alpha\left(\, n-m\, \right),  \\
\norm{T_{n} \, T_{m}^{*}}_{\mathcal{B}(\mathcal{H})} \, \leq \, \beta \left(\, n-m\, \right).
\end{split}
\end{equation}
for all $m,n\in \mathbb{Z}$. Furthermore, assume that 

\[ A:=\sum_{n\in \mathbb{Z}} \, \sqrt{\alpha(n)}< \infty \qquad , \qquad B:=  \sum_{n\in \mathbb{Z}}\, \sqrt{\beta(n)} < \infty.
\] 
\noindent
Then the operator $\sum_{n} T_n$ converges unconditionally and enjoys the bound
\begin{equation*}\label{cot1}\norm{\,\sum_{n\in \mathbb{Z}} \, T_{n} \, }_{\mathcal{B}(\mathcal{H})} \, \leq \, 2 \, \sqrt{AB}.
\end{equation*}

\end{lemma}
\noindent 
The standard proof of Lemma 8.5.1, in \cite{Gra14}, can be easily be adapted to prove this version of the lemma, thus we omit the proof. 

\subsection{The $\A_2(\mu)$-condition}

Now in order to satisfy the hypothesis of the lemma \thref{gencotlar}, we necessarily need to establish boundedness of the $T_{n}$'s, uniformly in $n\in \mathbb{Z}$, which accounts for the diagonal case $m=n$ in the hypothesis \eqref{Cotest}. Since $\mathcal{J}^{n}$ is a disjoint collection, it suffices to find a uniform bound for the individual terms of $T_{n}$. This task is captured by the following lemma.

\begin{lemma}\thlabel{diag}
For any pair of complex-valued functions $\psi_Q, \varphi_Q$ supported on a cube $Q \subset \R^d$ and satisfying $\norm{\psi_Q}_{L^\infty(\mu)} \leq 1 , \norm{\varphi_Q}_{L^\infty(\mu)} \leq 1$, we have  

\begin{equation*}
\norm{\frac{1}{\mu(Q)}\, \left(\psi_Q \otimes \varphi_Q\right)_\mu }_{L^{2}_{W}\rightarrow L^{2}_{W}} \leq  \norm{\langle W \rangle_{\mu, Q}^{1/2} \, \langle W^{-1} \rangle_{\mu, Q}^{1/2}}_{\B(\Hil)}.
\end{equation*} 

\end{lemma}

\begin{proof} Fix an arbitrary $f\in L^{\infty}_{0}(\mu)\otimes \Hil$, so that $\norm{\psi_{Q} \, \langle \varphi_Q f \rangle_{\mu, Q} }_{L^{2}_{W}}< \infty$, and note that by Cauchy-Schwartz inequality, we can write
\begin{equation} \label{I} 
\begin{split}\norm{\psi_{Q} \, \langle \varphi_Q f \rangle_{\mu, Q} }^{2}_{L^{2}_{W}}= \int_{Q}\left( W^{-1/2} \langle |\psi_Q|^2 W \rangle_{\mu, Q} \langle \varphi_Q f \rangle_{\mu, Q}  \lvert \,W^{1/2} (\varphi_Q f )\right)_{\Hil} d\mu 
\\
\leq
\left(\int_{Q} \norm{W^{-1/2} \langle |\psi_Q|^2 W \rangle_{\mu, Q} \langle \varphi_Q f \rangle_{\mu, Q} }^{2}_{\Hil}d\mu \right)^{1/2} \norm{\varphi_{Q}f}_{L^{2}_{W}}.
\end{split}
\end{equation}
\noindent
We now estimate the integral on the right hand side of (\ref{I}), according to

\begin{multline*}
 \int_{Q} \norm{W^{-1/2} \langle |\psi_Q|^2 W \rangle_{\mu, Q} \langle \varphi_Q f \rangle_{\mu, Q} }^{2}_{\Hil}d\mu  =  \\
  \mu(Q) \norm{\langle W^{-1} \rangle_{\mu,Q}^{1/2} \langle |\psi_Q|^2 W \rangle_{\mu, Q}^{1/2} \left( \langle |\psi_Q|^2 W \rangle_{\mu, Q}^{1/2} \langle \varphi_Q f \rangle_{\mu, Q}\right)}^{2}_{\Hil} \leq  
 \\
 \norm{\langle W^{-1} \rangle_{\mu,Q}^{1/2} \langle |\psi_Q|^2 W \rangle_{\mu, Q}^{1/2} }^2_{\B(\Hil)} \mu(Q) \norm{\langle |\psi_Q|^2 W\rangle_{\mu, Q}^{1/2} \langle \varphi_Q f \rangle_{\mu, Q}}^{2}_{\Hil} 
= \\
\norm{\langle W^{-1} \rangle_{\mu,Q}^{1/2} \langle |\psi_Q|^2 W \rangle_{\mu, Q}^{1/2} }^2_{\B(\Hil)} \norm{\psi_{Q} \langle \varphi_Q f \rangle_{\mu, Q}}_{L^{2}_{W}}^{2}.
\end{multline*}
Going back to the expression in $(\ref{I})$ and cancelling the common factors, we obtain

\begin{equation*}\norm{\psi_Q \, \langle \varphi_Q f \rangle_{\mu, Q} }_{L^{2}_{W}}\, \leq \norm{\langle W^{-1} \rangle_{\mu, Q}^{1/2} \, \langle |\psi_Q|^2 W \rangle^{1/2}_{\mu, Q}}_{\B (\Hil)}\, \norm{\varphi_Q\, f}_{L^{2}_{W}}.
\end{equation*}
Note that by the $C^*$-identity, we can write
\begin{multline*} \label{phipsi}
\norm{\langle  W^{-1} \rangle^{1/2}_{\mu,Q} \langle |\psi_Q|^2 W \rangle^{1/2}_{\mu, Q} }^{2}_{\bh} =  
\norm{ \langle W^{-1} \rangle^{1/2}_{\mu,Q}\langle |\psi_Q|^2 W \rangle_{\mu,Q} \langle W^{-1} \rangle^{1/2}_{\mu,Q} }_{\bh} = \\
\sup_{\norm{e}_{\Hil} =1} \left( \langle |\psi_Q|^2 W \rangle_{\mu, Q} \langle W^{-1} \rangle^{1/2}_{\mu, Q} e \lvert \langle  W^{-1} \rangle^{1/2}_{\mu, Q} e \right)_{\Hil}.
\end{multline*}
Expanding $\langle |\psi_Q|^2 W \rangle_Q$ and estimating the positive function $\norm{\psi_Q}_{L^\infty(\mu)} \leq 1$, the $C^*$-identity yields
\[
\norm{\langle  W^{-1} \rangle^{1/2}_{\mu,Q} \langle |\psi_Q|^2 W \rangle^{1/2}_{\mu, Q} }^2_{\bh} \leq \norm{\langle W \rangle_{\mu, Q}^{1/2} \, \langle W^{-1} \rangle_{\mu, Q}^{1/2}}^2_{\B(\Hil)}.
\]
Consequently, we deduce that
\begin{equation*}\label{Uaverbdd}
\norm{\frac{1}{\mu(Q)}\, \left(\psi_Q \otimes \varphi_Q \right)_\mu }_{L^2_W \to L^2_W} \leq \norm{\langle W \rangle_{\mu, Q}^{1/2} \, \langle W^{-1} \rangle_{\mu, Q}^{1/2} }_{\bh}.
\end{equation*}



\end{proof}
\begin{remark}\thlabel{1Qnorm}
We remark that if $\psi_Q, \varphi_Q$ are both equal to the indicator function $1_Q$, then we actually have the following norm equality
\[
\norm{\frac{1}{\mu(Q)} \left(1_Q \otimes 1_Q \right)}_{L^2_W \to L^2_W }  = \norm{\langle W \rangle_{\mu, Q}^{1/2} \, \langle W^{-1} \rangle_{\mu, Q}^{1/2}}_{\bh}.
\]
Indeed, one can show that the norm-equality is attained using functions of the form $f= 1_Q W^{-1}e$, with $e\in \Hil$.
\end{remark}
\subsection{Sharp estimates for scalar-valued weights}
In this section, we include a couple of auxiliary lemmas about scalar-valued weights, which will be of crucial in the proof of \thmref{SpL2W}. The following result is essentially borrowed from Lemma 4.3 in \cite{NPTV17} and will later allow us to reduce estimates of operator-valued weights to scalar-weights.

\begin{lemma} \thlabel{reductionA2} 
Let $W: \R^d \to \bh$ be an $\A^\S_2(\mu)$-weight and $e\in \Hil$ a non-zero vector. Then $\left( W\, e \, \lvert\, e\right)_{\Hil}$ is a scalar-valued $\A^{\S}_{2}(\mu)$-weight and satisfies 
\begin{equation*} \left[ \left( W  e   \lvert  e \right)_{\Hil} \right]_{\A^\S_2(\mu)} \leq  \left[ W \right]_{\A^\S_2 (\mu)} .
\end{equation*}
\end{lemma}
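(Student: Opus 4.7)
The plan is to reduce the scalar $A_2^\S$-quantity $\langle w_e\rangle_Q\langle w_e^{-1}\rangle_Q$ to the operator $\A_2^\S$-constant via pointwise Cauchy--Schwarz for the factorization $W^{1/2}\cdot W^{-1/2}=I$, combined with the $\A_2^\S$-condition \eqref{A2cond} rewritten as an operator inequality. Since $\langle w_e\rangle_Q\langle w_e^{-1}\rangle_Q$ is invariant under the rescaling $e\mapsto ce$, I may and will assume $\|e\|_{\Hil}=1$.

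The easy identity $\langle w_e\rangle_Q=(\langle W\rangle_Q e\,|\,e)_{\Hil}$ is immediate from the definition of the Bochner integral. The non-trivial task is to control $\langle w_e^{-1}\rangle_Q$. The temptation is to use the pointwise bound $1/(W(x)e|e)_{\Hil}\le(W(x)^{-1}e|e)_{\Hil}$, which integrates to $\langle w_e^{-1}\rangle_Q\le(\langle W^{-1}\rangle_Q e\,|\,e)_{\Hil}$; however, this is too lossy, since the product $(\langle W\rangle_Q e|e)_{\Hil}(\langle W^{-1}\rangle_Q e|e)_{\Hil}$ can strictly exceed $[W]_{\A_2^\S}=\|\langle W\rangle_Q^{1/2}\langle W^{-1}\rangle_Q^{1/2}\|^2$. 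I will instead establish the sharper estimate
\[
\langle w_e^{-1}\rangle_Q \,\le\, \frac{1}{(\langle W^{-1}\rangle_Q^{-1}\,e\,|\,e)_{\Hil}},
\]
whose right-hand side is the $e$-quadratic form of the operator-valued harmonic mean of $W$. The proof proceeds by applying the pointwise Cauchy--Schwarz identity $(v|e)_{\Hil}=(W(x)^{1/2}e\,|\,W(x)^{-1/2}v)_{\Hil}$, which yields $(W(x)^{-1}v|v)_{\Hil}\ge|(v|e)_{\Hil}|^2/(W(x)e|e)_{\Hil}$ for every constant $v\in\Hil$; integrating over $Q$ gives $(\langle W^{-1}\rangle_Q v|v)_{\Hil}\ge|(v|e)_{\Hil}|^2\langle w_e^{-1}\rangle_Q$, and the choice $v=\langle W^{-1}\rangle_Q^{-1}e$ collapses this to $(\langle W^{-1}\rangle_Q^{-1}e|e)_{\Hil}\ge (\langle W^{-1}\rangle_Q^{-1}e|e)_{\Hil}^{2}\,\langle w_e^{-1}\rangle_Q$, which rearranges to the claimed bound.

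To close the argument, I interpret the $\A_2^\S$-definition via the $C^{*}$-identity $\|T\|^2=\|T^*T\|$: the bound $\|\langle W\rangle_Q^{1/2}\langle W^{-1}\rangle_Q^{1/2}\|^2\le[W]_{\A_2^\S}$ is equivalent to the operator inequality $\langle W\rangle_Q\le[W]_{\A_2^\S}\,\langle W^{-1}\rangle_Q^{-1}$ on $\Hil$. Testing at $e$ gives $(\langle W\rangle_Q e|e)_{\Hil}\le[W]_{\A_2^\S}(\langle W^{-1}\rangle_Q^{-1}e|e)_{\Hil}$, and multiplying by the sharp bound for $\langle w_e^{-1}\rangle_Q$ telescopes to
\[
\langle w_e\rangle_Q\,\langle w_e^{-1}\rangle_Q \,\le\, [W]_{\A_2^\S}\cdot\frac{(\langle W^{-1}\rangle_Q^{-1}e|e)_{\Hil}}{(\langle W^{-1}\rangle_Q^{-1}e|e)_{\Hil}} \,=\, [W]_{\A_2^\S}.
\]
Taking $\sup_{Q\in\S}$ completes the proof.

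The main obstacle is identifying the correct operator-valued surrogate of the ``harmonic mean of $w_e$''; the naive substitute $(\langle W^{-1}\rangle_Q e|e)_{\Hil}$ loses too much, and the sharp substitute $(\langle W^{-1}\rangle_Q^{-1}e|e)_{\Hil}^{-1}$ requires the specific test-vector trick $v=\langle W^{-1}\rangle_Q^{-1}e$ described above. Once this sharpening is in place, the $\A_2^\S$-condition enters only in the most direct way.
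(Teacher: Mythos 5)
Your proof is correct, but it takes a different route from the paper. The paper derives the lemma as a quick corollary of its Lemma \ref{diag}: both $\abs{\langle w_e\rangle_Q^{1/2}\langle w_e^{-1}\rangle_Q^{1/2}}$ and $\norm{\langle W\rangle_Q^{1/2}\langle W^{-1}\rangle_Q^{1/2}}_{\mathcal{B}(\Hil)}$ are identified as operator norms of the averaging operator $f\mapsto 1_Q\langle f\rangle_Q$ on $L^2_{w_e}$ and $L^2_W$ respectively, and the inequality follows by testing the $L^2_W$-norm only on the subspace of tensor functions $\psi\otimes e$, since $\norm{1_Q\langle \psi\otimes e\rangle_Q}_{L^2_W}=\norm{1_Q\langle\psi\rangle_Q}_{L^2_{w_e}}$ and $\norm{\psi\otimes e}_{L^2_W}=\norm{\psi}_{L^2_{w_e}}$. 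You instead argue directly at the level of quadratic forms: the pointwise Cauchy--Schwarz step with the test vector $v=\langle W^{-1}\rangle_Q^{-1}e$ gives the harmonic-mean bound $\langle w_e^{-1}\rangle_Q\le (\langle W^{-1}\rangle_Q^{-1}e\,\lvert\,e)_{\Hil}^{-1}$, and the $C^*$-identity converts the $\A_2^\S$-condition into the operator inequality $\langle W\rangle_Q\le [W]_{\A_2^\S}\langle W^{-1}\rangle_Q^{-1}$, after which the two estimates multiply out exactly. Both arguments are complete and yield the same constant; your remark that the naive bound $\langle w_e^{-1}\rangle_Q\le(\langle W^{-1}\rangle_Q e\lvert e)_{\Hil}$ would be lossy is accurate. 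The paper's proof is shorter once Lemma \ref{diag} is available and emphasizes the unifying ``averaging-operator norm'' viewpoint used throughout; yours is self-contained, avoids any appeal to weighted operator norms, and isolates the sharper intermediate inequality (domination of the scalar average of $w_e^{-1}$ by the quadratic form of the operator harmonic mean), which is of independent use. One cosmetic point: the finiteness and local integrability of $w_e^{-1}$ on cubes of $\S$ is implicitly delivered by your integrated Cauchy--Schwarz inequality together with the weak local integrability of $W^{-1}$; it would be worth one sentence to say so explicitly, since the statement asserts that $w_e$ is a scalar $A_2^\S$-weight.
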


Reducing inequalities to scalar weights as indicated by \thref{reductionA2}, allows for application of sharp estimates for scalar weights. The following lemma is essentially a quantitative version of the portion preserving property of scalar-valued $A^{\S}_{2}(\mu)$-weights. For the sake of abbreviation, we shall use the following notation $w(E) := \int_E w d\mu$

\begin{lemma}\thlabel{portA2}Let $w$ be a scalar-valued $\A^{\S}_{2}(\mu)$-weight and $0< \delta <1$. Then for every $Q\in \S$ and $S \subset Q$ with $\mu(S) \leq \delta \mu(Q)$, we have that 
\[ 
\int_S w d\mu \leq \left(1- \frac{(1-\delta)^2}{\left[w\right]_{\A^{\S}_2(\mu)}} \right) \int_Q w d\mu
\]
\end{lemma}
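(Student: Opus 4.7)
The plan is to work with the complementary set $S' := Q \setminus S$ and lower-bound $w(S')$, since then $w(S) = w(Q) - w(S')$ automatically yields the claimed upper bound. Note that $|S'| \geq (1-\delta)|Q|$ by the hypothesis on $|S|$.

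The key step is an application of the Cauchy--Schwarz inequality to the trivial factorization $1 = w^{1/2} \cdot w^{-1/2}$ on $S'$:
\begin{equation*}
|S'|^2 = \left( \int_{S'} w(x)^{1/2} \, w(x)^{-1/2} \, dx \right)^2 \leq w(S') \cdot w^{-1}(S') \leq w(S') \cdot w^{-1}(Q),
\end{equation*}
where in the last inequality I just enlarged the domain of integration. Now the $A^{\S}_2$-condition applied to $Q \in \S$ gives
\begin{equation*}
w^{-1}(Q) \leq \frac{[w]_{A^{\S}_2} \, |Q|^2}{w(Q)}.
\end{equation*}
Substituting back and solving for $w(S')$ yields
\begin{equation*}
w(S') \geq \frac{|S'|^2 \, w(Q)}{[w]_{A^{\S}_2} \, |Q|^2} \geq \frac{(1-\delta)^2}{[w]_{A^{\S}_2}} \, w(Q).
\end{equation*}

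Finally, writing $w(S) = w(Q) - w(S')$ and inserting this lower bound gives exactly the claimed estimate. There is no real obstacle here: the argument is a one-line Cauchy--Schwarz together with the definition of the $A_2$-constant, and the exponent $(1-\delta)^2$ and the linear appearance of $[w]_{A^{\S}_2}$ come out automatically from this manipulation. The only subtlety worth flagging is that $w^{-1}$ is guaranteed to be locally integrable on $Q$ precisely because $w \in A^{\S}_2$, so all the integrals above are finite and the manipulations are legitimate.
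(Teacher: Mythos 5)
Your proposal is correct and follows essentially the same route as the paper: both arguments apply Cauchy--Schwarz to $1 = w^{1/2}w^{-1/2}$ on the complement $E_S = Q\setminus S$, invoke the $A^{\S}_2$-condition on $Q$, use $\abs{E_S}\geq(1-\delta)\abs{Q}$, and finish by writing $w(S)=w(Q)-w(E_S)$. The only difference is cosmetic: the paper chains the inequalities to bound $w(Q)$ from above by a multiple of $w(E_S)$, whereas you solve for a lower bound on $w(E_S)$ directly.
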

\begin{proof}Set $E_S := Q \setminus S$ and notice that $(1-\delta)\mu(Q) \leq \mu(E_S)$. With this at hand, we estimate according to
\[ \int_Q w d\mu \leq \left[ w\right]_{\A^{\S}_2(\mu)}  \frac{\mu(Q)^2}{\int_{E_S} w^{-1} d\mu } \leq  \frac{\left[ w\right]_{\A^{\S}_2(\mu)} }{(1-\delta)^2}\frac{\mu(E_S)^2}{\int_{E_S} w^{-1} d\mu} \leq 
\frac{\left[ w\right]_{\A^{\S}_2(\mu)} }{(1-\delta)^2} \int_{E_S} w d\mu.
\]
The proof readily follows by writing $\int_{E_S}w d\mu= \int_Q w d\mu - \int_S w d\mu$ and rearranging in the previous inequality. 
\end{proof}

In the context of dyadic Muckenhoupt weights, we can actually obtain a sharper version of the portion preserving property, which does not utilize the full strength of the dyadic $A_2$-condition and instead relies on the weaker notion of dyadic $A_{\infty}$-weights. In the context of $\A_\infty(\mu)$, we shall need to assume that $\mu$ is a doubling measure with constant $K_\mu$, previously defined in \eqref{Doublingcond}. We state and prove a tailor-made version of this principle in the following context.

\begin{lemma}\thlabel{sAinf}
Let $w$ be a scalar-valued dyadic $\A_{\infty}(\mu)$-weight and let $0<\delta < 2^{-16K^{12}_\mu [w]_{\A_{\infty}(\mu)}}$. Then for any $Q\in \Dy$ and Borel set $S\subset Q$ with $\mu(S) \leq \delta \mu(Q)$, there exists $0< \eta < \frac{1}{2}$, such that $w(S) \leq \eta w(2Q)$. In fact, we can take $\eta= 2K^2_{\mu} \delta^{\varepsilon/2}$, with $\varepsilon = \frac{1}{6K^{10}_\mu [w]_{\A_\infty(\mu)} } $.
\end{lemma}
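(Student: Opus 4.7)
The plan is to reduce the statement to the sharp quantitative reverse H\"older inequality for dyadic Fujii--Wilson $A_\infty$-weights, and then obtain the portion-preserving estimate by a single application of H\"older's inequality. Recall the sharp reverse H\"older inequality of Hyt\"onen--P\'erez (see \cite{HytPer13}): for a scalar dyadic $A_\infty$-weight $w$ and
\[
\varepsilon \;=\; \frac{1}{2^{d+1}\,[w]_{A_\infty}-1},
\]
one has, for every $Q\in \Dy$,
\[
\langle w^{1+\varepsilon}\rangle_Q^{1/(1+\varepsilon)} \;\leq\; 2^{1/(1+\varepsilon)}\,\langle w\rangle_Q.
\]
This is the only non-trivial input, and it is precisely where the exponent $\varepsilon$ and the absolute constant $2$ appearing in the statement of the lemma arise.

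With this inequality in hand, the main estimate is immediate. Given $S\subset Q$ with $|S|\leq \delta|Q|$, by H\"older with exponents $(1+\varepsilon)/\varepsilon$ and $1+\varepsilon$,
\begin{equation*}
w(S) \;=\; \int_{S} w\,dx \;\leq\; |S|^{\varepsilon/(1+\varepsilon)} \left( \int_Q w^{1+\varepsilon}\,dx\right)^{1/(1+\varepsilon)}
\;=\; |S|^{\varepsilon/(1+\varepsilon)}\, |Q|^{1/(1+\varepsilon)}\,\langle w^{1+\varepsilon}\rangle_Q^{1/(1+\varepsilon)}.
\end{equation*}
Invoking the reverse H\"older inequality and rewriting $\langle w\rangle_Q = w(Q)/|Q|$, the right-hand side collapses to
\[
2^{1/(1+\varepsilon)}\left(\frac{|S|}{|Q|}\right)^{\varepsilon/(1+\varepsilon)} w(Q) \;\leq\; 2^{1/(1+\varepsilon)} \delta^{\varepsilon/(1+\varepsilon)}\,w(Q) \;=\; \eta\, w(Q),
\]
which is the desired bound with the prescribed value of $\eta$.

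It remains only to verify that the constraint $\delta < 2^{-2^{d+2}[w]_{A_\infty}}$ forces $\eta < 1/2$. Taking $\log_2$ in the definition of $\eta$, the inequality $\eta<1/2$ is equivalent to
\[
\log_2 \delta \;<\; -\frac{2+\varepsilon}{\varepsilon} \;=\; -\bigl(2^{d+2}[w]_{A_\infty}-1\bigr),
\]
i.e.\ $\delta < 2\cdot 2^{-2^{d+2}[w]_{A_\infty}}$, which is implied by the hypothesis. The only step that could be called an obstacle is the sharp reverse H\"older inequality, and since a clean statement with exactly this exponent is already available in the literature, the lemma follows by combining it with the elementary H\"older step above.
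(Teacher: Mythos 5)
Your proposal is correct and follows essentially the same route as the paper: the sharp reverse H\"older inequality with $\varepsilon = \frac{1}{2^{d+1}[w]_{A_\infty}-1}$ (the paper cites the Hyt\"onen--P\'erez--Rela version), followed by the same single application of H\"older's inequality, and your explicit verification that $\delta < 2^{-2^{d+2}[w]_{A_\infty}}$ forces $\eta < 1/2$ is the ``straightforward calculation'' the paper leaves to the reader, carried out correctly.
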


\begin{proof} This proof relies on a sharp version of the reverse H\"older inequality (see \cite{hytonen2012sharp}, Theorem 1.1), adapted to our setting. For instance, it asserts that for $0< \varepsilon \leq \frac{1}{ 6K^{10}_\mu [w]_{\A_\infty(\mu)} }$, one has
\begin{equation*} \langle w^{1+\varepsilon} \rangle_{\mu, Q}^{1/(1+\varepsilon)} \leq 2K^2_\mu  \langle w \rangle_{\mu, 2Q}
\end{equation*}
for all $Q\in \Dy$. Now, let $S\subset Q$ with $\mu(S) \leq \delta  \mu(Q)$. By H\"older's inequality and the sharp version the of the reverse H\"older inequality, we get 
\begin{multline*}
\int_S w d\mu \leq  \mu(S)^{\varepsilon/(1+\varepsilon)} \mu(Q)^{1/(1+\varepsilon)}   \langle w^{1+\varepsilon} \rangle_{\mu,Q} ^{1/(1+\varepsilon)} \leq
2 K^2_\mu  \delta^{\varepsilon /(1+\varepsilon)} \int_{2Q} w d\mu \\
\leq 2 K^2_\mu  \delta^{\varepsilon /2} \int_{2Q} w d\mu.
\end{multline*}
It is straightforward to check that $2 K^2_\mu \, \delta^{\varepsilon /2} < \frac{1}{2}$, whenever $0<\delta < 2^{-16K^{12}_\mu [w]_{\A_{\infty}(\mu)}}$.
\end{proof}

%
%

\section{Proof of Main Result} 
\label{MResSec}
%
%

\subsection{The Lower Bound} 
\label{LowA2bdd}

\begin{proof}[Proof of the lower bound of \thmref{SpL2W} ]
Given a sparse collection $\S \subset \Dy$, we let $\sigma: \S \to \mathbb{Z}$ an injective function and consider the standard orthogonal basis on $L^2([0, 2\pi), \C)$ given by the trigonometric system $\mathcal{E}:= \{e^{int} : n\in \mathbb{Z} \}$. With this at hand, we define the sparse Lerner operators
\begin{equation}\label{TrigLerops}
T^{\S, \mathcal{E}}_{\psi, \varphi} f (x) = \sum_{Q\in \S} e^{i\sigma(Q) t} \psi_Q(x) \langle \varphi_Q f \rangle_{\mu , Q}.
\end{equation}
Now suppose there exists a constant $C>0$, possibly depending on $W$, such that
\[
\sup_{\psi, \varphi} \norm{(T^\S_{\psi, \varphi} \otimes \mathds{1}) f}_{L^{2}_{W}}   \leq  C \norm{f}_{L^2_W}
\]
for all $f \in L^2_W$. Since elements in $\mathcal{E}$ are unimodular constants, we also have that all operators $T^{\S, \mathcal{E}}_{\psi, \varphi}$ have operator-norm on $L^2_W$ bounded by $C$. Now using the orthogonality assumption of $\mathcal{E}$, we get that
\[
\sum_{Q\in \S} \norm{ \psi_Q \langle \varphi_Q f \rangle_{\mu,Q} }^2_{L^2_W} = \int_{0}^{2\pi} \norm{ (T^{\S, \mathcal{E}}_{\psi, \varphi}\otimes \mathds{1}) f }^2_{L^2_W} \frac{dt}{2\pi} \leq C^2 \norm{f }^2_{L^2_W}
\]
for all $f\in L^2_W$. In particular, this means that for any $Q \in \S$, we have that
\[
\sup_{\psi, \varphi} \norm{ \psi_Q \langle \varphi_Q f \rangle_{\mu,Q} }_{L^2_W} \leq C \norm{f}_{L^2_W}.
\]
According to \thref{1Qnorm}, we have  
\[
\norm{\langle W \rangle^{1/2}_{\mu, Q} \langle W^{-1} \rangle^{1/2}_{\mu, Q} }_{\bh} = \sup_{\norm{f}_{L^2_W}= 1} \norm{ 1_Q \langle 1_Q f \rangle_{\mu, Q} }_{L^2_W} \leq C.
\]
Taking supremum over $Q\in \S$ and infimum over all constant $C>0$, we finally conclude that 
\[
[W]^{1/2}_{\A^\S_2(\mu)} \leq \sup_{\psi, \varphi} \norm{(T^{\S}_{\psi, \varphi} \otimes \mathds{1} ) }_{L^2_W \to L^2_W}.
\]
\end{proof}

We remark that if a single sparse Lerner operator is bounded on $L^2_W$, then have only managed to obtain the lower bound of its operator norm in terms of $[W]^{1/4}_{\A^\S_2(\mu)}$, which is slightly weaker. We are not certain whether this can be sharpened.

\subsection{The Upper Bound}

In order to establish the upper bound of \thmref{SpL2W}, we need to verify that the sequence of linear operators $\left\{T_{n}\right\}_{n\in \mathbb{Z}}$ in \eqref{aoterm} satisfy the prerequisites of the \thref{gencotlar}. For the sake of abbreviation, we shall simply denote by $T_{n}$ the unique canonical extensions to $L^2_W$.

\begin{proof}[Proof of the upper bound of \thmref{SpL2W}]
Fix an arbitrary sequence of bounded complex-valued functions $\{\psi_Q \}, \{\varphi_Q\}$, where each of the $\psi_Q, \varphi_Q$ are supported on $Q$ and satisfy $\norm{\psi_Q}_{L^\infty(\mu)}, \norm{\varphi_Q}_{L^\infty(\mu)} \leq 1$. Recall that the decomposition in subsection \ref{PAOsec} allows us to express an arbitrary sparse Lerner operator as 
\[
T^\S_{\psi, \varphi} = \sum_{n \in \mathbb{Z}} T_n
\]
where 
\[
T_n := \sum_{Q \in \mathcal{J}^n} \frac{1}{\mu(Q)} \left( \psi_Q \otimes \varphi_Q \right)_\mu 
\]
We shall now verify that the hypothesis of \thref{gencotlar} are satisfied. Note that since $\mathcal{J}^{n}$ is a disjoint collection, for all $n\in \mathbb{Z}$, we can apply \thref{diag}, which gives

\begin{multline}\label{Diagest}
\norm{T_{n}f}^{2}_{L^{2}_{W}} = \sum_{Q \in \mathcal{J}^{n}} \norm{\psi_{Q} \langle \varphi_Q f \rangle_{\mu, Q}}^{2}_{L^{2}_{W}} \leq \left[ W \right]_{\A^\S_2(\mu)} \sum_{Q\in \mathcal{J}^n}  \norm{1_{Q}  f}_{L^2_W}^2 \\ \leq  \left[ W \right]_{\A^\S_2(\mu)}  \norm{f}^2_{L^2_W}.
\end{multline} 
This yields the following uniform bound for all the diagonal terms 
\begin{equation} \norm{T^{*}_{n} \, T_{n}}_{L^{2}_{W} \rightarrow L^{2}_{W}} =  \norm{T_{n} \, T^{*}_{n}}_{L^{2}_{W}\rightarrow L^{2}_{W}} =  \norm{T_{n}}^{2}_{L^{2}_{W} \rightarrow L^{2}_{W}} \leq \, \left[W\right]_{\A^\S_2(\mu)} .
\end{equation}
It now remains to establish bounds for the non-diagonal terms. To this end, we may without loss of generality assume that $n>m$, since by the $C^{*}$-identity, we can write

\begin{equation*} \norm{T^{*}_{n} \, T_{m}}_{L^{2}_{W}\rightarrow L^{2}_{W}} \, = \, \norm{T^{*}_{m} \, T_{n}}_{L^{2}_{W}\rightarrow L^{2}_{W}}.
\end{equation*}
Now observe that the boundedness of $T_{n}: L^{2}_{W}\rightarrow L^{2}_{W}$ is equivalent to the boundedness of the following composition of operators on the non-weighted $\Hil$-valued $L^{2}$-space (think $L^2_W$ with $W$ being the identity operator on $\Hil$), namely  

\begin{equation*}L_n := M_{W^{1/2}}\, T_{n} \, M_{W^{-1/2}} : L^2 \to L^2
\end{equation*} 
where $M_{W^{\pm 1/2}}$ denotes the usual multiplication operator with $W^{\pm 1/2}$. This reduction is obvious, as illustrated by the following diagram 
%
%
\[ \begin{tikzcd}[column sep=1in,row sep=0.4in]
L^{2} \arrow{r}{ M_{W^{1/2}}\, T_{n} \, M_{W^{-1/2}} } \arrow[swap]{d}{M_{W^{-1/2}}} & L^{2} \\
L^{2}_{W}\arrow{r}{T_{n}} & L^{2}_{W} \arrow[swap]{u}{M_{W^{1/2}}} 
\end{tikzcd}
\]
Now we can easily compute the adjoint of the operator $L_n$ on $L^{2}$, which is given by
\[
L^*_n = M_{W^{-1/2}} T^*_n M_{W^{1/2}} = M_{W^{-1/2}} \sum_{Q \in \mathcal{J}^n} \A^{*}_{\mu, Q} M_{W^{1/2}}.
\]
where the adjoint of the averaging operator on $L^2$ takes the form 
\[
\A^{*}_{\mu, Q}(f)(x) = \frac{1}{\mu(Q)} \conj{\varphi_Q(x)}\int_{Q}\conj{\psi_Q(y)}f(y) d\mu(y).
\]
By the $C^*$-identity, we actually seek a bound for the operator norm of 
\begin{equation}\label{eqnorm1}
\norm{T_{n}^{*} T_{m}}^{2}_{L^2_W \to L^2_W} = \norm{L^*_n L_m }^2_{L^2 \to L^2}.
\end{equation}
Using the fact that $\mathcal{J}^n$ is a disjoint collection, we can explicitly compute the kernel expression of the positive linear operator $L_n L^*_n= M_{W^{1/2}}T_n M_{W^{-1}} T^*_n M_{W^{1/2}}$, given by 
\[
L_n L^*_n = \sum_{Q \in \mathcal{J}^n} \left( M_{W^{1/2}} \A_{\mu, Q} M_{W^{-1/2}} \right)(M_{W^{1/2}} \A_{\mu, Q} M_{W^{-1/2}})^*.
\]
As a consequence, using the $C^*$-identity, we can write
\[
\norm{L^*_n L_m f}^2_{L^2 \to L^2} = \left( L_n L^*_n L_m f \lvert L_m f \right)_{L^2} = \sum_{Q \in \mathcal{J}^n} \norm{(M_{W^{1/2}} \A_{ \mu, Q} M_{W^{-1/2}})L_m f}^2_{L^2}.
\]
According to \thref{diag}, the family of averaging operators  $M_{W^{1/2}} \A_{\mu, Q} M_{W^{-1/2}}$ are localized at $Q$ and map $L^2 \to L^2$ with operator norm uniformly bounded by $[W]^{1/2}_{\A^{\S}_2(\mu)}$. Using this we obtain 
\begin{equation}\label{L_nest}
    \norm{L^*_n L_m f}^2_{L^2} \leq \left[W \right]_{\A^{\S}_2(\mu)} \sum_{Q \in \mathcal{J}^n} \norm{1_Q L_m(f)}^2_{L^2}.
\end{equation}
It remains to estimate the sum on the right hand side of \eqref{L_nest}. To this end, recall that $n>m$ and $\mathcal{J}^m$ disjoint, hence changing the order of summation, it is straightforward to check that we can express 
\begin{multline} \label{Mixest}
    \sum_{Q \in \mathcal{J}^n} \norm{1_Q L_m(f)}^2_{L^2} = \sum_{R\in \mathcal{J}^m} \sum_{Q \in \mathcal{J}^{(n-m)}(R)} \norm{1_Q \left( M_{W^{1/2}} \A_{\mu, Q} M_{W^{-1/2}} \right)f}^2_{L^2} \\
    = \sum_{R\in \mathcal{J}^m} \sum_{Q \in \mathcal{J}^{(n-m)}(R)} \int_Q \abs{\psi_R(x)}^2 \left( W(x) \langle \varphi_R W^{-1/2}f \rangle_{\mu, R} \lvert \langle \varphi_R W^{-1/2}f \rangle_{\mu, R} \right)_{\Hil} d\mu(x) \\
    \leq \sum_{R\in \mathcal{J}^m} \sum_{Q \in \mathcal{J}^{(n-m)}(R)} \int_Q \left( W(x) \langle \varphi_R W^{-1/2}f \rangle_{\mu, R} \lvert \langle \varphi_R W^{-1/2}f \rangle_{\mu, R} \right)_{\Hil} d\mu(x).
\end{multline}
Note that in the last step, we used the simple estimate $\norm{\psi_{R}}_{L^\infty(\mu)} \leq 1$. Let $e=\langle \varphi_R W^{-1/2} f \rangle_{\mu, R}$ and consider the scalar-weight $w_e(x) = (W(x) e \lvert e)_\Hil$. Combining the previous estimates in \eqref{L_nest} and in \eqref{Mixest}, we can write
\begin{equation} \label{est2}
\norm{L^*_n L_m f}^2_{L^2}\leq 
[W]_{\A^\S_2(\mu)} \sum_{R\in \mathcal{J}^m } \sum_{Q \in \mathcal{J}^{(n-m)}(R) } \int_Q w_e(x) d\mu(x).
\end{equation}
Recall that by \thref{reductionA2}, $w_e$ is a scalar $A^{\S}_{2}$-weight with $\left[w_e\right]_{A^\S_2(\mu)} \leq \left[W\right]_{\A^\S_2(\mu)}$ (independent $e\in \Hil$). According to the \textit{decaying stopping time} property in \eqref{DSTn} we have 

\begin{equation*} \sum_{Q \in \mathcal{J}^{(n-m)}(R)} \mu(Q)  \leq \left(\frac{1}{2}\right)^{(n-m)}  \mu(R) .
\end{equation*}
Regrouping all cubes in $\mathcal{J}^{(n-m)}(R)$ with common predecessors in $\mathcal{J}^{(n-m-1)}(R)$ and successively applying \thref{portA2} in each step as we move towards the top cube $R$, we obtain

\begin{equation*}\label{A2Sest} 
\sum_{Q \in \mathcal{J}^{(n-m)}(R)} \int_Q w_e d\mu  \leq \left(1-\frac{1}{4\left[W\right]_{\A^\S _2(\mu)} } \right)^{(n-m)} \int_R w_e d\mu.
\end{equation*}
Going back to the estimate in \eqref{est2} and using these observations with $e=\langle \varphi_R W^{-1/2} f \rangle_{\mu, R}$ and applying \thref{diag}, we obtain

\begin{multline*}
\norm{L^*_n L_m f}^2_{L^2}  \leq [W]_{\A^\S_2(\mu)} \left(1-\frac{1}{4\left[W\right]_{\A^{\S}_2(\mu) }} \right)^{(n-m)} \sum_{R\in \mathcal{J}^{m}} \int_R w_e d\mu \\ 
=  [W]_{\A^\S_2(\mu)} \left(1-\frac{1}{4\left[W\right]_{\A^\S _2(\mu)} } \right)^{(n-m)}  \sum_{R \in \mathcal{J}^m} \norm{ \frac{1}{\mu(R)} \left( 1_R \otimes \varphi_R \right)_\mu W^{-1/2} f }^2_{L^2_W}  \\
\leq [W]^2_{\A^{\S}_2(\mu)} \left(1-\frac{1}{4\left[W\right]_{\A^\S _2(\mu)} } \right)^{(n-m)} \sum_{R \in \mathcal{J}^m } \norm{1_R W^{-1/2}f }^2_{L^2_W} \\ 
\leq [W]^2_{\A^{\S}_2(\mu)} \left(1-\frac{1}{4\left[W\right]_{\A^\S _2(\mu)} } \right)^{(n-m)} \norm{f}^2_{L^2}.
\end{multline*}

Now recalling the identity in \eqref{eqnorm1}, we arrive at 

\begin{equation*} \norm{T_{n}^{*} \, T_{m} }_{L_{W}^{2}\rightarrow L^{2}_{W}} \leq  \left(1-\frac{1}{4\left[W\right]_{\A^\S _2} } \right)^{(n-m)/2} \, \left[ W\right]_{\A^\S_2 (\mu)}.
\end{equation*}
In an identical manner, we can estimate the operator norms 

\[ \norm{T_{n} T_{m}^{*}}_{L^2_W \to L^2_W}  =  \norm{L_n L^*_m}_{L^2 \to L^2}.
\] 
Indeed, this is done by running through the same argument as before, with the exception of the dual weight $W^{-1}$ playing the previous role of $W$. Since the $\A^\S_2(\mu)$-condition is symmetric, that is $\left[W^{-1}\right]_{\A^{\S}_2(\mu)} = \left[W\right]_{\A^{\S}_2(\mu)}$, the proof principally remains unchanged. We then analogously get 

\[ \norm{T_{n}\, T_{m}^{*}}_{L^2_W \to L^2_W} \leq \left(1-\frac{1}{4\left[W\right]_{\A^\S _2(\mu)} } \right)^{(n-m)/2} \, \left[ W\right]_{\A^\S_2 (\mu)}.
\] 
The hypothesis of \thref{gencotlar} are thus satisfied, and  an application gives

\begin{multline*} 
\norm{\sum_{n \in \mathbb{Z} }  T_n }_{L^2_W \to L^2_W} \leq 2 \sum_{n\in \mathbb{Z}} [W]^{1/2}_{\A^\S_2(\mu)} \left(1-\frac{1}{4[W]_{\A^\S _2(\mu)} } \right)^{\abs{n}/4} 
\leq \frac{4[W]^{1/2}_{\A^{\S}_2(\mu)}}{1-\left(1-\frac{1}{4[W]_{\A^\S_2(\mu)}} \right)^{1/4}}. 
\end{multline*}
According to \thref{gencotlar} again, we also have that $\sum_n T_n$ converges unconditionally to $T^{\S}_{\psi, \varphi}$. Using the simple inequality $\frac{1}{1-t^{1/4}} = \frac{(1+t^{1/4})(1+t^{1/2})}{1-t} \leq \frac{4}{1-t}$, for $0<t <1$, we finally conclude that 

\begin{equation}\label{FineqC1} \sup_{\psi, \varphi} \norm{ T^\S_{\psi, \varphi} \otimes \mathds{1} }_{L^2_W \to L^2_W} \leq 64 \left[W \right]^{3/2}_{\A^\S_2(\mu)}.
\end{equation}
Together with the lower bound in subsection \ref{LowA2bdd}, the proof of \thref{SpL2W} is complete. 

\end{proof}
We now turn to the proof of \thref{MixbddCor}.
%
%
\begin{proof}[Proof of \thref{MixbddCor}]
This time, we assume that $\mu$ is a doubling measure with constant $K_\mu$ and that $W$ is a dyadic Muckenhoupt $\A_2(\mu)$-weight. The proof follows that of \thref{SpL2W} verbatim, up until the step in \eqref{est2}, thus it suffices to continue from there. Again, set $w_e = (We \lvert e)_\Hil$ and for each $R\in \mathcal{J}^m$, let $\widehat{R}$ denote the unique child of $R$ in $\S$, which contains $\bigcup_{Q \in \mathcal{J}^{(n-m)}(R) }  Q $. Now recall the decaying stopping time property in \eqref{DSTn} saying that
\[
\mu\left( \bigcup_{Q \in \mathcal{J}^{(n-m)}(R) }  Q \right) = \sum_{Q \in \mathcal{J}^{(n-m)}(R)} \mu(Q) \leq 2^{-(n-m)}\mu(R).
\]
If $(n-m) > 16 K^{12}_\mu [W]_{\A_\infty (\mu)} $, then we may apply \thref{sAinf} to the sets $\bigcup_{Q \in \mathcal{J}^{(n-m)}(R) }  Q$ and $\widehat{R}$, giving 

\begin{equation*}\sum_{Q\in \mathcal{J}^{(n-m)}(R)}  \int_Q w_e d\mu \leq  \eta_{+}(n-m) \int_{2\widehat{R}} w_e d\mu \leq  \eta_{+}(n-m) \int_R w_e d\mu
\end{equation*}
where $\eta_{+}(n-m) = 4K^2_\mu 2^{-(n-m) \varepsilon_+ }$ with $\varepsilon_+ = 1/(12K^2_\mu [W]_{\A_\infty(\mu)})$. Now going back to \eqref{est2} with $e = \langle \varphi_R W^{-1/2} f \rangle_{\mu, R}$, we have for $(n-m) > 16 K^{12}_\mu [W]_{\A_\infty (\mu)}$:

\begin{multline*}
\norm{L^*_m L_n f}^2_{L^2}  \leq [W]_{\A_2(\mu)} \eta_+(n-m) \sum_{R\in\mathcal{J}^{m}} \int_R w_e d\mu \\
\leq  [W]_{\A_2(\mu)} \eta_+(n-m) \sum_{R \in \mathcal{J}^m} \norm{ \frac{1}{\mu(R)} \left( 1_R \otimes \varphi_R \right)_\mu W^{-1/2} f }^2_{L^2_W}  \\
\leq [W]^2_{\A_2(\mu)} \eta_+(n-m) \sum_{R \in \mathcal{J}^m } \norm{1_R W^{-1/2}f }^2_{L^2_W} 
\leq [W]^2_{\A_2(\mu)} \eta_+(n-m) \norm{f}^2_{L^2}.
\end{multline*}
Note that in the intermediate step we also used \thref{diag}. In light of \eqref{eqnorm1}, we thus arrive at

\begin{equation}\label{opnormest1} 
\norm{T_{n}^{*}  T_{m} }_{L_{W}^{2}\rightarrow L^{2}_{W}} \leq  \eta_{+}(n-m)^{1/2}  \left[ W\right]_{\A_2(\mu)} \, \, , \text{if} \, \, (n-m) > 16 K^{12}_\mu [W]_{\A_\infty (\mu)}.
\end{equation}

Now if $(n-m) \leq 16K^{12}_\mu [W]_{\A_\infty (\mu)}$, we simply use the trivial estimate

\begin{equation*}\sum_{Q\in \mathcal{J}^{(n-m)}(R)}  \int_Q w_e d\mu   \leq  \int_R w_e d\mu. 
\end{equation*}
Similar but simpler calculations then gives
\begin{equation}\label{opnormest2}\norm{T_{n}^{*} T_{m} }_{L_{W}^{2}\rightarrow L^{2}_{W}} \leq  \left[ W\right]_{\A_2(\mu)} \qquad , \text{if} \qquad (n-m) \leq 16K^{12}_\mu [W]_{\A_\infty (\mu)}.
\end{equation}
Combining \eqref{opnormest1}, \eqref{opnormest2} and recalling that $n>m$, we arrive at

\[ \norm{T_{n}^{*} \, T_{m} }_{L_{W}^{2}\rightarrow L^{2}_{W}}  \leq \alpha(n-m ) 
\]
where 

\[ \alpha(n-m) :=
\begin{cases} 
      \left[W\right]_{\A_2(\mu)} \,  &  \, \abs{n-m} \leq 16K^{12}_\mu [W]_{\A_\infty (\mu)} \\
       \left[ W\right]_{\A_2(\mu)} \eta_{+}(\, \abs{n-m}\, )^{1/2} &  \, \abs{n-m} > 16K^{12}_\mu [W]_{\A_\infty (\mu)}.
   \end{cases}
\]
In an identical manner, we can estimate the norm $ \norm{T_{n}\, T_{m}^{*}}_{L^{2}_{W}\rightarrow L^{2}_{W}} $. Again, we repeat the same procedure as before, with the exception of substituting $W$ with its dual weight $W^{-1}$ and using the symmetry that $W\in \A_2(\mu)$ iff $W^{-1}\in \A_2(\mu)$. In this case, we apply \thref{sAinf} to $\sigma_e = (W^{-1}e \lvert e)_\Hil$ which gives rise to the parameters $\varepsilon_{-} = 1/(12K^2_\mu [W^{-1}]_{\A_\infty(\mu)})$ and $\eta_{-}(n-m) = 4K^2_\mu 2^{-(n-m) \varepsilon_- }$. At the end, we analogously arrive at
\[
 \norm{T_{n} \, T_{m}^{*}}_{L^2_W \to L^2_W}  \leq \beta(n-m ) 
\]
where 
\[ \beta(n-m) :=
  \begin{cases} 
      \left[W\right]_{\A_2(\mu)} \,  &  \, \abs{n-m} \leq 16K^{12}_\mu [W^{-1}]_{\A_\infty (\mu)} \\
      \left[ W\right]_{\A_2(\mu)} \eta_{-}(\, \abs{n-m})^{1/2}  & \, \abs{n-m} > 16K^{12}_\mu [W^{-1}]_{\A_\infty (\mu)}.
   \end{cases}
\]
\thref{gencotlar} on the principle of almost orthogonality now applies and we deduce 

\begin{equation}\label{AppCot} 
\norm{T^\S_{\psi, \varphi} \otimes \mathds{1}}_{L^2_W \to L^2_W} = \norm{ \sum_{n\in \mathbb{Z}}  T_{n}  }_{L^2_W \to L^2_W}  \leq  2 \left(\sum_{n\in \mathbb{Z}}  \sqrt{\alpha(n)}  \cdot  \sum_{n\in \mathbb{Z}} \sqrt{\beta(n)} \right)^{1/2}.
\end{equation}
It now remains to estimate the sums on the right hand side of \eqref{AppCot}. We write

\begin{equation*}
\sum_{n=1}^{\infty} \sqrt{\alpha(n)} = [W]^{1/2}_{\A_2(\mu)} \left(
\sum_{\abs{n} \leq 16K^{12}_\mu [W]_{\A_\infty (\mu)}}   1 + \sum_{\abs{n}> 16K^{12}_\mu [W]_{\A_\infty (\mu)}}  \eta_{+}(|n|)^{1/4} \right) =: S_{1} + S_{2}.
\end{equation*}
The first sum is trivially bounded by

\begin{equation*}S_{1}  \leq  32K^{12}_\mu [W]^{1/2}_{\A_2(\mu)} [W]_{\A_\infty (\mu)}
\end{equation*}
Recalling that $\eta_+(|n|) = 4K^2_\mu 2^{-|n| \varepsilon_+}$ with $\varepsilon_+ = 1/(12K^{12}_\mu [W]_{\A_\infty(\mu)})$, the second sum is a geometric series and can be estimated according to

\begin{equation*} S_{2}  \leq  2 [W]^{1/2}_{\A_2(\mu)} \sum_{n=0}^{\infty} \eta_+ (n)^{1/4} = \frac{8K^2_\mu [W]^{1/2}_{\A_2(\mu)}}{1-2^{-\varepsilon_+/4}}.
\end{equation*}
Note that the function $t\mapsto \frac{t}{1-2^{-t/4}}$ is bounded on $[0,1]$, hence we can find a numerical constant $c>0$ ($c=7$ will do), such that 
\[
S_2 \leq  8 K^2_\mu [W]^{1/2}_{\A_2(\mu)} c/\varepsilon_+ = c_{\mu} [W]^{1/2}_{\A_2(\mu)} [W]_{\A_\infty(\mu)}.
\]
Here $c_\mu >0$ is a constant only depending $\mu$, which we shall allow to change from line to line. Adding up the estimates of $S_{1}$ and $S_{2}$ yields

\begin{equation*} \sum_{n\in \mathbb{Z}} \sqrt{\alpha(n)} \leq c_\mu \left[W\right]^{1/2} _{\A_2(\mu)} \, \left[W\right]_{\A_{\infty}(\mu)} .
\end{equation*} 
An identical argument also shows that 

\begin{equation*}  \sum_{n\in \mathbb{Z}} \sqrt{\beta(n)} \leq c_\mu \left[W\right]^{1/2} _{\A_2(\mu)} \left[W^{-1}\right]_{\A_{\infty}(\mu)}.
\end{equation*}
Going back to \eqref{AppCot} with these estimates at hand, we ultimately arrive at

\begin{equation*} \sup_{\psi, \varphi, \S} \norm{T^{\S}_{\psi,\varphi} \otimes \mathds{1}}_{L^2_W \to L^2_W}  \leq  c_\mu \left[W\right]^{1/2}_{\A_2(\mu)}  \left[W\right]^{1/2}_{\A_{\infty}(\mu)}  \left[W^{-1} \right]^{1/2}_{\A_{\infty}(\mu)}.
\end{equation*}
This completes the proof of \thref{MixbddCor}.
\end{proof}
%
%
%
%
\section{weighted bounds for the Bergman projection}
\label{Bergsec}
In this section, we shall prove \thref{BergL2W}, based on a convex body domination by sparse operators. To this end, we shall need to introduce the following dyadic grids on $\R$
\[
\Dy^\omega(\R) := \{ 2^j ( [0,1) + m + (-1)^j \omega ): m \in \mathbb{Z}, \, j \in \mathbb{Z} \}
\]
for $\omega \in \{0,1/3 \}$. These systems have previously appeared in many different works on sparse domination, see for instance \cite{Ler13}, \cite{HytPer13} and references therein. Note that $\Dy^0(\R)$ is just the standard grid on $\R$, while $\Dy^{1/3}(\R)$ is a shifted alternating grid, but when combined, they have the following useful property.

\begin{lemma}[Lemma 3.1, \cite{PePo13}] \thlabel{Dygridlem} 
For any interval $I \subset \R$, there exists a dyadic interval $J \in \Dy^\omega(\R)$ for some $\omega \in \{0,1/3\}$, such that $I \subset J$ and $|J| \leq 8 |I|$.
\end{lemma}
Using these dyadic grids, we shall consider the corresponding collections of Carleson squares
\[
\mathcal{Q}^\omega := \left\{ Q_J := J \times (0 , |J| ] : J \in \Dy^\omega(\R) \right\}
\]
with $\omega \in \{0,1/3\}$, which are easily seen to be sparse collection of dyadic cubes on $\C_+$. For a fixed $f \in L^\infty_0 (dA_\gamma) \otimes \Hil$, we introduce the convex body averages
\[
\llangle f \rrangle_{\gamma, Q_J} := \left\{ \frac{1}{A_\gamma(Q_J)} \int_{Q_J} \varphi f dA_\gamma : \, \varphi: Q_J \to \C, \, \norm{\phi}_{\infty} \leq 1 \right\}.
\]
The convex body averages are symmetric, convex and compacts subsets of $\Hil$, where the compactness follows from weak-compactness of the unit-ball of $\Hil$ together with the weak-star compactness of the unit-ball of $L^\infty(dA_\gamma)$. With this at hand, we can define the set-valued sparse operators 
\begin{equation}\label{Minkowsum}
L_\gamma f(z) = \sum_{\omega \in \{0, 1/3\}} \sum_{J \in \Dy^\omega} 1_{Q_J}(z) \llangle f \rrangle_{Q_J, \gamma}
\end{equation}
regarded as Minkowski sums of convex body averages. It follows from arguments identical to Lemma 2.5 in \cite{NPTV17}, that the corresponding Minkowski sum in \eqref{Minkowsum} is for each $f\in L^\infty_0 (dA_\gamma) \otimes \Hil$ and a.e $z\in \C_+$, is a bounded symmetric convex set of $\Hil$. The main reason for introducing these operators, is show that for each fixed $z\in \C_+$ and $f \in L^\infty_0(dA_\gamma)\otimes \Hil$, the family of maximal Bergman projections $P^+_\gamma$ with $\gamma>-1$, belongs to a fixed dilation of the set \eqref{Minkowsum}. The content of our next result is the following convex body domination, using techniques inspired from \cite{PePo13}. 

\begin{prop} \thlabel{CBDprop}
There exists a constant $C_\gamma>0$, only depending on $\gamma > -1$ such that for any $f\in L^\infty_0(dA_\gamma)\otimes \Hil$ and $z \in \C_+$, we have 
\[
P^{(+)}_{\gamma}(f)(z) \in C_\gamma L_\gamma (f)(z).
\]
\end{prop}
\begin{proof}
It suffices to prove the claim for the maximal Bergman projections $P^+_\gamma$. To this end, fix an arbitrary $z \in \C_+$ and $f\in L^\infty_0(dA_\gamma)\otimes \Hil$. Note that we can write
\[
P^+_\gamma f(z) = \sum_{k= -\infty}^{\infty} \int_{2^k \leq |z-\conj{\xi}| <2^{k+1} } \frac{f(\xi)}{|z-\conj{\xi}|^{2+\gamma}} dA_\gamma(\xi).
\]
It thus suffices to find a constant $C_\gamma>0$, such that each term satisfies 
\begin{equation}\label{Cbterm}
\int_{2^k \leq |z-\conj{\xi}| <2^{k+1} } \frac{f(\xi)}{|z-\conj{\xi}|^{2+\gamma}} dA_\gamma(\xi) \in C_\gamma \llangle f \rrangle_{Q_{J_k}, \gamma}
\end{equation}
for some $J_k \in \Dy^\omega(\R)$ with $\omega \in \{0,1/3\}$ and the collection $\{J_k\}_{k\in \mathbb{Z}}$ has finite multiplicity. To this end, fix an integer $k$ and pick an arbitrary $\xi \in \C_+$ satisfying $2^k \leq |z-\conj{\xi}| <2^{k+1}$. If $\Re(z) \leq \Re(\xi)$, then $\xi \in Q_{I(z)}$ where $I(z) := [ \Re(z), \Re(z) + 2^{k+1})$, and if $\Re(z) > \Re(\xi)$ then we instead pick $I(z):= [\Re(z)-2^{k+1}, \Re(z) )$ for which $\xi \in Q_{I(z)}$. According to \thref{Dygridlem}, there exists an interval $J_k \in \Dy^{\omega}$, for some $\omega \in \{0,1/3\}$, such that $I(z) \subset J_k$ and $|J_k| \leq 8 |I(z)|$. From this it follows that 
\[
1_{\{\xi: 2^k \leq |z - \conj{\xi} | < 2^{k+1} \} }(z) \frac{1}{|z- \conj{\xi}|^{2+\gamma}} \leq 1_{Q_{J_k}}(z) 2^{-k(2+\gamma)} \leq 2^{2+\gamma} 8^{2+\gamma} 1_{Q_{J_k}}(z) \frac{1}{|J_k|^{2+\gamma}}.
\]
This establishes \eqref{Cbterm} with $C_\gamma = 2^{2+\gamma}8^{2+ \gamma}= 16^{2+\gamma}$, thus it suffices to prove that each interval $J_k$ in \eqref{Cbterm} appears at most finitely many times. However, note that by construction each interval $J_{k}$ contains an interval of length $2^{k+1}$ and is itself of length no more than $2^{k+4}$, thus for any pair of integers $k,m \in \mathbb{Z}$ with $\abs{m-k} > 4$, the intervals $J_k, J_m$ must necessarily be distinct. Consequently the collection of intervals $\{J_k\}_{k \in \mathbb{Z}}$ have at most multiplicity $4$, which completes the proof of this proposition.
\end{proof}

\thref{CBDprop} tells us that in order to find weighted bounds for the (maximal) Bergman projections $P^{(+)}_\gamma$, it suffices to understand the set-valued sparse operators in \eqref{Minkowsum}. Although, these set-valued operators are complicated objects, Lemma 2.7 in \cite{NPTV17} tells us that it suffices to find a uniform bound for the following family of sparse operators 
\[
\widetilde{T}_{\S}(f)(x) = \sum_{Q \in \S}\underbrace{\frac{1}{\mu(Q)} \int_Q \kappa_Q(x,y) f(y) d\mu(y)}_{:= \A_{\mu,\kappa_Q}(f)(x)}
\]
where the $\kappa_Q(x,y)$ are allowed to be any complex-valued kernels supported on $Q\times Q$ and belonging to the unit-ball of $L_0^\infty(\mu \otimes \mu)$. Unfortunately, we are not certain that the operator norm of $\A_{\mu,\kappa_Q}: L^2_W(\Hil, \mu) \to L^2_W(\Hil, \mu)$ is uniformly bounded by $[W]^{1/2}_{\A^{\S}_2(\mu)}$, unless $\Hil \cong \C^N$, in which the equivalence of norms on $\Hil$ plays a crucial role in the proof. In fact, if $\Hil \cong \C^N$, then we shall see that it actually suffices to find a uniform bound for the family of sparse Lerner operators in \eqref{Lernerops}. This observations is a consequence a general lemma, which initially appeared in an earlier pre-print version of \cite{NPTV17}. For convenience, we shall phrase it in our context and provide a short sketch of proof.

\begin{lemma}[\cite{NPTV17}] \thlabel{LemLerops}
Let $f \in L^\infty_{0}(dA_\gamma, \C^N)$. Then there exists complex-valued measurable functions $\{\varphi_j \}_{j=1}^{N}$ supported on $Q_J$ with $\norm{\varphi_j}_{\infty} \leq 1$ such that for any $g(\xi) \in \llangle f \rrangle_{Q_J, \gamma}$ for a.e $\xi \in Q_J$, there exists complex-valued measurable functions $\{\psi_j \}_{j=1}^N$, with $\norm{\psi_j}_\infty \leq C_N$, such that 
\[
g(\xi) = \sum_{j=1}^N \psi_j(\xi) \langle \varphi_j f \rangle_{Q_J,\gamma}.
\]
\end{lemma} 
\begin{proof}
Since $K=\llangle f \rrangle_{Q_J, \gamma}$ is convex, there exists a unique ellipsoid $\E_K$ of maximal volume contained in $K$, called the John-ellipsoid of $K$, at it satisfies the property $\E_K \subseteq K \subseteq \sqrt{N} \E_K$. Let $1\leq M \leq N$ denote the dimension of the principal axis of $\E_K$ and $\{e_j\}_{j=1}^M$ denote the vectors corresponding to its principal axis. Since with $e_j \in \E_K \subseteq K$, there exists complex-valued $\varphi_j$ supported on $Q_J$ with $ \norm{\varphi_j}_{L^\infty(\mu)} \leq 1$, such that $e_j = \langle \varphi_j f \rangle_{\mu, Q_J}$, for $j=1, \dots, M$. Now since $K \subseteq \sqrt{N} \E_K$, every measurable vector function $g$ on $Q$ with values in $K$ has the form 
\[
g(x) = \sum_{j=1}^M \psi^g_j(x)e_j \qquad \mu- \text{a.e} \,\, \, x \in Q
\]
where $\{ \psi^g_j(x)\}_{j=1}^M$ are measurable functions, and $\sum_{j=1}^M \abs{\psi^g_j(x)}^2 \leq C(N)$, for some dimensional dependent constant $C(N)>0$. This completes the proof.

\end{proof}

Now \thref{BergL2W} is readily follows from this lemma, in conjunction with \thref{CBDprop} and \thref{SpL2W}. A sparse domination bound for the (maximal) Bergman projections in infinite dimensions seems to require a more precise dyadic model than that of \thref{CBDprop}, which we unfortunately have not been able to find. 

%
%

\section{commutators of sparse operators and applications to the multi-parameter setting} 
\label{AppSec}

In this section, we shall use our main results to show new boundedness results on commutators of sparse operators with operator-valued functions. This in turn can be applied to prove boundedness results for iterated commutators in the bi-parameter setting.

For a locally weakly integrable function $B: \R^d \to \bh$ and a collection of dyadic cubes $\S$, we define the strong operator BMO-norm relative 
to $\S$ by

\begin{multline*}
   \| B \|^2_{SBMO^\S} := \sup_{Q \in \S}  \sup_{ \| e\|_\Hil =1} \frac{1}{|Q|} \int_Q \| B(x)e - \langle B e \rangle_Q \|^2 dx +\\
       \sup_{Q \in \S}  \sup_{\| e\|_\Hil =1} \frac{1}{|Q|} \int_Q \| B^*(x)e - \langle B^* e\rangle_Q  \|^2.
\end{multline*}

We shall denote the space of all such functions with finite norm by $SBMO^\S$. In case that $\S$ is the collection of all dyadic cubes in $\R^d$, we simply write $SBMO^\Dy$. For any Banach space $X$, we write $BMO(\R^d,X)$ for the so-called norm-BMO space consisting of all locally Bochner integrable functions

\[
  f: \R^d \rightarrow X, \quad \sup_{Q \subset \R^d ,\text{ cube}} \frac{1}{|Q|}\int_Q \|f(x) - \langle f \rangle_Q \|^2_X dx < \infty,
\]

and we write $BMO^\Dy(\R^d,X)$ in case the supremum is only taken over dyadic cubes $\Dy$.
It is well-known that the John-Nirenberg Theorem holds in this context, so the $L^2(X)$ norm can be replaced by any $L^p(X)$ norm for $1 < p < \infty$. It is also well-known that $BMO^\Dy(\B(\Hil))$ is strictly contained in $SBMO^\Dy$.Here is the main result of this section.

\begin{thm}   \thlabel{thm:com}
 Let $B: \R^d \rightarrow \B(\Hil)$ be a locally weakly integrable function, let $\S$ be a sparse collection of dyadic cubes in $\R^d$ and $T^\S_{\psi, \varphi}$ be the corresponding
 sparse operator as in \eqref{Lernerops}. Then the family of commutators $[T^\S_{\psi, \varphi} , B]$ given by
 \begin{equation}    \label{def:comm}
      [T^\S_{\psi, \varphi}, B] f  = T^\S_{\psi, \varphi} B f - B T^\S_{\psi, \varphi} f
 \end{equation}
 for $\Hil$-valued functions $f$ with finite Haar expansion,
 extends a bounded linear operator on $L^2(\R^d, \Hil)$, if and only if $B \in SBMO^\S$. In this case, 
 
 \[
  \| [T_\S, B] \|_{L^2(\R^d,\Hil) \to L^2(\R^d,\Hil)} \approx \|B \|_{SBMO_\S}.
 \]
In particular, if $B \in SBMO^\Dy$, then any commutator with a sparse operator as in \eqref{def:comm} is bounded on  $L^2(\R^d, \Hil)$.
\end{thm} 

Before proving this theorem, we shall first establish a corollary about iterated commutators in the biparameter setting, which requires one more definition.
Let $b: \R^d \times \R^s \to \C$ be a locally integrable function and let $\S$ be a collection of dyadic cubes in $\R^d$. We define 

\begin{multline*}
   \| b\|^2_{rect, \S}  \\
       := \sup_{ Q \in \S, R \subset \R^s \text{ cube}}    \frac{1}{|Q| |R|} \int_Q \int_R | b(x,y) - \langle b \rangle_Q(y) - \langle b \rangle_R(s) + \langle b \rangle_{Q \times R}|^2 dy dx.
\end{multline*}
\noindent
In case that $\S$ is the collection of all dyadic cubes in $\R^d$, we want to write $\| b\|^2_{rect, \Dy}$. Note that this is not quite the usual dyadic rectangular $BMO$ norm (see e.g. \cite{BlPo05}), but a mixture of a 
dyadic and a non-dyadic rectangular BMO norm.

\begin{cor}   \thlabel{cor:itcom}
Let $b: \R^d \times \R^s \rightarrow \C$ be a locally integrable function, let $T^{(1)}_\S$ be a sparse operator for a sparse family $\S$ in $\R^d$ as in \eqref{spOps}, and $ T^{(2)}$ be
a Calder{\'o}n-Zygmund operator on $\R^s$. Suppose that $\| b \|_{rect, \S} < \infty$. Then the iterated commutator  $ [T^{(1)}_\S, [ T^{(2)}, b ]]$, given by

\[
   [T^{(1)}_\S, [ T^{(2)}, b ]] f =  T^{(1)}_\S T^{(2)} b  -  T^{(1)}_\S b  T^{(2)} -  T^{(2)} b T^{(1)}_\S f +   b T^{(1)}_\S T^{(2)} f
\]
for suitable functions $f$ on $\R^d \times \R^s$, extends to a bounded linear operator on $L^2( \R^d \times \R^s)$.
\end{cor}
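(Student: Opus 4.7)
The strategy is to recast the iterated commutator as a single operator-valued commutator to which Theorem~\ref{thm:com} can be applied. Under the canonical identification $L^{2}(\R^{d}\times\R^{s})\cong L^{2}(\R^{d},\Hil)$ with $\Hil:=L^{2}(\R^{s})$, define $B:\R^{d}\to\B(\Hil)$ by $B(x):=[T^{(2)},M_{b(x,\cdot)}]$, where $M_{g}$ denotes pointwise multiplication by $g$ on $\Hil$. Because $T^{(1)}_{\S}$ and $T^{(2)}$ act on disjoint variables, they commute as operators on $L^{2}(\R^{d}\times\R^{s})$, and a direct expansion of the four terms defining $[T^{(1)}_{\S},[T^{(2)},b]]$ shows that this iterated commutator coincides, under the above identification, with the operator-valued commutator $[T^{(1)}_{\S},B]$ on $L^{2}(\R^{d},\Hil)$. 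By Theorem~\ref{thm:com}, boundedness of $[T^{(1)}_{\S},B]$ is equivalent to $B\in SBMO_{\S}$, so it suffices to establish $\|B\|_{SBMO_{\S}}\lesssim\|b\|_{rect,\S}$.

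Fix $Q\in\S$ and a unit vector $e\in\Hil$, and set $\phi_{x}^{Q}(y):=b(x,y)-\langle b\rangle_{Q}(y)$. Pulling the $x$-average inside $T^{(2)}$ gives
\begin{equation*}
B(x)e-\langle Be\rangle_{Q}=[T^{(2)},M_{\phi_{x}^{Q}}]e,
\end{equation*}
so the desired $SBMO_{\S}$ bound reduces to the core estimate
\begin{equation*}
\int_{Q}\bigl\|[T^{(2)},M_{\phi_{x}^{Q}}]e\bigr\|_{L^{2}(\R^{s})}^{2}\,dx\;\lesssim\;|Q|\,\|b\|_{rect,\S}^{2}\,\|e\|^{2},
\end{equation*}
uniformly in $Q\in\S$ and $\|e\|_{\Hil}\leq 1$. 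The adjoint condition in the definition of $SBMO_{\S}$ is treated identically, since $B(x)^{*}=-[T^{(2)*},M_{\overline{b(x,\cdot)}}]$ and both the Calder\'on--Zygmund property and the rectangular BMO norm are invariant under passing to $T^{(2)*}$ and $\overline{b}$.

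For the core estimate, I would use Hyt\"onen's dyadic representation theorem to replace $T^{(2)}$, up to an average over random dyadic grids, by finite-complexity dyadic shifts on $\R^{s}$. The standard paraproduct decomposition of commutators of shifts with multipliers then reduces matters to the model case where $[T^{(2)},M_{\phi}]$ is replaced by a Haar paraproduct $\Pi_{\phi}e=\sum_{R}\langle\phi,h_{R}\rangle\langle e\rangle_{\widehat{R}}h_{R}$, with $R$ running over dyadic cubes in $\R^{s}$ and $h_{R}$ the corresponding Haar functions. Orthonormality of the Haar system together with Fubini yields
\begin{equation*}
\int_{Q}\|\Pi_{\phi_{x}^{Q}}e\|^{2}\,dx=\sum_{R}|\langle e\rangle_{\widehat{R}}|^{2}\int_{Q}|\langle\phi_{x}^{Q},h_{R}\rangle|^{2}\,dx.
\end{equation*}
For any dyadic cube $S\subset\R^{s}$, Parseval's identity together with a direct computation of $\phi_{x}^{Q}(y)-\langle\phi_{x}^{Q}\rangle_{S}=b(x,y)-\langle b\rangle_{Q}(y)-\langle b\rangle_{S}(x)+\langle b\rangle_{Q\times S}$ gives
\begin{equation*}
\sum_{R\subsetneq S}\int_{Q}|\langle\phi_{x}^{Q},h_{R}\rangle|^{2}\,dx=\int_{Q}\int_{S}\bigl|b(x,y)-\langle b\rangle_{Q}(y)-\langle b\rangle_{S}(x)+\langle b\rangle_{Q\times S}\bigr|^{2}\,dy\,dx\leq|Q|\,|S|\,\|b\|_{rect,\S}^{2},
\end{equation*}
where the last step uses $Q\in\S$. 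This is precisely a Carleson condition on the coefficient sequence $\bigl(\int_{Q}|\langle\phi_{x}^{Q},h_{R}\rangle|^{2}dx\bigr)_{R}$ with Carleson constant $|Q|\,\|b\|_{rect,\S}^{2}$, and the standard dyadic Carleson embedding on $\R^{s}$ then delivers the required bound.

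The main obstacle is conceptual rather than technical: a direct application of the classical Coifman--Rochberg--Weiss commutator estimate would require a pointwise-in-$x$ control of $\|\phi_{x}^{Q}\|_{BMO(\R^{s})}$, which is strictly stronger than what the rectangular BMO norm of $b$ supplies. Decomposing the commutator into Haar paraproducts circumvents this by converting the commutator norm into a quadratic functional of the Haar coefficients of $\phi_{x}^{Q}$ in the $y$-variable, which $L^{2}$-linearises the $x$-dependence and aligns it exactly with the averaged oscillation encoded in $\|b\|_{rect,\S}$. The fact that the rectangular BMO norm is defined with supremum only over cubes $Q\in\S$, matching the $Q\in\S$ that appear in $SBMO_{\S}$, is what makes this sparse-averaged hypothesis sufficient.
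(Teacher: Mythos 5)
Your overall frame coincides with the paper's: set $\Hil = L^2(\R^s)$, define $B(x) = [T^{(2)}, b(x,\cdot)]$, identify the iterated commutator with $[T^{(1)}_\S, B]$, and reduce via Theorem \ref{thm:com} to the estimate $\|B\|_{SBMO_\S} \lesssim \|b\|_{rect,\S}$; your identity $B(x)e - \langle Be\rangle_Q = [T^{(2)}, M_{\phi_x^Q}]e$ is exactly the paper's first step. The divergence, and the gap, is in the core estimate. You only verify the model case of a single Haar paraproduct $\Pi_{\phi_x^Q}$, where orthogonality plus Fubini diagonalizes the $x$-average and your Carleson computation is fine. But the claim that Hyt\"onen's representation theorem plus ``the standard paraproduct decomposition of commutators of shifts with multipliers'' reduces $[T^{(2)}, M_{\phi_x^Q}]$ to this single model is not justified and does not hold as stated: the decomposition $M_\phi = \Pi_\phi + \Pi_\phi^* + \Lambda_\phi$, and the commutators of each piece with the cancellative shifts, produce terms --- notably the adjoint (sweep) paraproduct $\Pi_{\phi_x^Q}^*$ and the inner remainder terms of $[S, M_{\phi_x^Q}]$ for a shift $S$ of complexity $(i,j)$ --- which are not diagonal in the Haar basis, so the identity $\int_Q \|\cdot\|^2 dx = \sum_R |\langle e\rangle_{\widehat R}|^2 \int_Q |\langle \phi_x^Q, h_R\rangle|^2 dx$ is unavailable for them. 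In the usual pointwise-in-$x$ proofs these terms are bounded using $\|\phi_x^Q\|_{BMO}$, which you rightly note you do not have; controlling them with only the $x$-averaged information amounts to proving boundedness of paraproducts and commutators with an $L^2(Q)$-valued BMO symbol, i.e.\ you would in effect have to redo the Hilbert-space-valued Coifman--Rochberg--Weiss theorem shift by shift, with complexity bookkeeping, which your sketch does not do.

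That vector-valued CRW theorem is precisely the paper's shortcut, and it also shows that your ``main obstacle'' paragraph misdiagnoses the situation: no pointwise-in-$x$ control of $\|\phi_x^Q\|_{BMO(\R^s)}$ is needed. The paper rewrites $\frac{1}{|Q|}\int_Q \|B(x)e - \langle Be\rangle_Q\|^2\, dx$ as $\frac{1}{|Q|}\big\| [T^{(2)}, \tilde b]\, e \big\|^2_{L^2(\R^s, L^2(Q))}$ for the $L^2(Q)$-valued symbol $\tilde b(y) = b(\cdot,y) - \langle b\rangle_Q(y)$, checks $\|\tilde b\|_{BMO(\R^s, L^2(Q))} \le |Q|^{1/2} \|b\|_{rect,\S}$ (the same fourth-difference computation you perform), and then invokes the Coifman--Rochberg--Weiss theorem for Hilbert-space-valued symbols in one stroke. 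To repair your argument, either quote that vector-valued CRW theorem directly, or carry out the full dyadic decomposition with every term estimated in the $L^2(Q)$-valued sense. Two smaller omissions: the a priori definition of $B(x)$ and the interchange $\langle Be\rangle_Q = [T^{(2)}, \langle b\rangle_Q]e$ require first assuming $b$ bounded and square-integrable and concluding by approximation, as the paper does; and your random-grid reduction silently uses that the $y$-cubes in $\|b\|_{rect,\S}$ range over arbitrary, not just standard dyadic, cubes --- which the paper's definition fortunately allows.
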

We start with the proof of \thref{cor:itcom}. 
\begin{proof}[Proof of \thref{cor:itcom}]
Let us assume for the moment that $b$ is bounded and contained in $L^2(\R^d \times \R^s)$. Let $\Hil = L^2(\R^s)$. We define 

\[
   B: \R^d \rightarrow \B(\Hil), \quad B(x) = [T^{(2)}, b(x, \cdot)].
\]
Since $b$ is bounded and $T^{(2)}$ is a Calder{\'o}n-Zygmund operator, $B(x) \in \bh$ for each $x \in \R^d$. Let $g \in L^2(\R^s)$ and $Q \in \S$, then

\begin{multline*}
   \frac{1}{|Q|} \int_Q \| B(x)g - \langle B g \rangle_Q \|^2 dx  \\
   = \frac{1}{|Q|} \int_Q \int_{\R^s} \left|  \left([T^{(2)}, b(x, \cdot)] g\right)(y) - \langle  [T^{(2)}, b(x, \cdot)] g \rangle_Q \right|^2 dy dx \\
   = \frac{1}{|Q|} \int_Q \int_{\R^s} \left|  \left([T^{(2)}, b(x, \cdot)] g\right)(x,y) - \left( [T^{(2)}, \langle b\rangle_Q] g \right)(y) \right|^2 dy dx \\
   =  \frac{1}{|Q|}  \int_{\R^s} \left\|  \left([T^{(2)}, b - \langle b\rangle_Q ] g\right)(\cdot,y) \right\|^2_{L^2(Q)} dy.\\
\end{multline*}
Note that the function $\tilde b: \R^s \rightarrow L^2(\R^d)$, defined by $\tilde b(y) = b(\cdot, y) - \langle b \rangle_Q(y)$, belongs to $BMO(L^2(Q))$ with norm less or equal to 
$|Q|^{1/2} \|b\|_{BMO_{rect, \S}}$,
since for any cube $R \subset \R^s$, we have

\begin{multline*}
       \frac{1}{|R|}\int_R \| \tilde b(y) - \langle \tilde b \rangle_R \|^2_{L^2(Q)} dy  \\= \frac{1}{|R|} \int_R \int_Q \left| b(x,y) - \langle b \rangle_Q(y) - \langle b \rangle_R(x) + \langle b \rangle_{Q \times R}     \right|^2 dx dy 
       \le |Q|  \|b\|^2_{BMO_{rect, \S}}.
\end{multline*}
Noting that the Coifman-Rochberg-Weiss Theorem for commutators \cite{coifman1976factorization} holds even in the case of Hilbert-space valued functions, we hence find that

\begin{equation} \label{ComEq}
   \frac{1}{|Q|} \int_Q \| B(x)g - \langle B g \rangle_Q \|^2 dx  
   \lesssim \frac{1}{|Q|} \| \tilde b \|_{BMO(L^2(Q))}^2 \|g\|^2_{L^2(\R^s)} \le  \|b\|_{BMO_{rect, \S}}^2.
 \end{equation}  
 Hence $B \in SBMO^\S$. Using \eqref{ComEq} and \thref{thm:com}, we find that
 
 \begin{multline*}
          \left\| [T^{(1)}_\S, [ T^{(2)}, b ]] \right\|_{L^2( \R^d \times \R^s) \to L^2( \R^d \times \R^s)} =   \left\| [T^{(1)}_\S, B] \right\|_{L^2( \R^d, L^2(\R^s)) \to L^2( \R^d, L^2(\R^s))} \\
          \lesssim \| B \|_{SBMO_\S} \lesssim   \|b\|_{BMO_{rect, \S}}.
 \end{multline*}
 The case for a general $b \in BMO_{rect, \S}$ follows by a standard approximation argument. 
\end{proof}

\begin{proof}[Proof of \thref{thm:com}] We follow a calculation from \cite{GPTV04} in a slightly more general setting.
For a function $B$ and a sparse family $\S$ as in the statement of the Theorem, we define the operator-valued weight $W_B:\R^d \to \B(\Hil \oplus \Hil)$ by
\[
        W_B(x) = V_B^*(x) V_B(x) =    \left(  \begin{matrix}   \einsH & 0 \\ 
                                     B^*(x)   &    \einsH   \end{matrix}   \right)
                                      \left(  \begin{matrix}   \einsH & B(x) \\ 
                                     0  &    \einsH   \end{matrix}   \right)
\]                 
We claim that there exists a numerical constant $c>0$, such that

\begin{equation}   \label{a2id}
  \frac{1}{c} [ W_B ]_{A_2^\S} \leq \| B \|_{SBMO^\S}^2+1   \leq c [ W_B ]_{A_2^\S}.
\end{equation}
Let $\rho(A)$ denote the spectral radius of an element $A \in \B(\Hil \oplus \Hil)$ and recall that $\rho(A) = \|A\|$ for positive operators $A$. Note that 

\[
        W_B^{-1}(x) =  V_B^{-1}(x) (V_B^*(x))^{-1}   = \left(  \begin{matrix}   \einsH & -B(x) \\ 
                                     0   &    \einsH   \end{matrix}   \right)
                                      \left(  \begin{matrix}   \einsH & 0 \\ 
                                     - B^*(x)  &    \einsH   \end{matrix}   \right).
\]
For any cube $Q \subset \R^d$, we may compute

 \begin{multline*}
    \left\|  \langle W_B \rangle_Q^{1/2}     \langle W_B^{-1} \rangle_Q^{1/2}  \right\|^2
    = \rho \left(  \langle  W_B \rangle_Q    \langle W_B^{-1} \rangle_Q    \right) = \\
     \rho\left( \frac{1}{|Q|^2} \int_Q \int_Q \left(  \begin{matrix}   \einsH & B(x) \\ 
    	B^*(x)   &    \einsH + B^*(x) B(x)  \end{matrix}   \right)
    \left(  \begin{matrix}   \einsH + B(y) B^*(y) & -B(y) \\ 
    	- B^*(y)  &    \einsH   \end{matrix} \right) dx dy \right) =\\
     \rho\left( \frac{1}{|Q|^2} \int_Q \int_Q \left(  \begin{matrix}   \einsH + B(y) B^*(y) - B(x) B^*(y) & B(x) - B(y) \\ 
    	\star   &    \einsH + B^*(x) B(x)  - B^*(x) B(y) \end{matrix}   \right) dx dy \right) \\
      = \rho\left( 
       \begin{matrix}   \einsH + \langle B B^* \rangle_Q - \langle B\rangle_Q \langle B^* \rangle_Q & 0 \\ 
    	\star   &    \einsH +  \langle B^*  B\rangle_Q  - \langle B^* \rangle_Q  \langle B \rangle_Q \end{matrix}  \right) \\
    = \max\{ \|  \einsH + \langle B B^* \rangle_Q - \langle B\rangle_Q \langle B^* \rangle_Q \|_{\bh} \, , \| \einsH +  \langle B^*  B\rangle_Q  - \langle B^* \rangle_Q  \langle B \rangle_Q \|_{\bh}   \}.
     \end{multline*}
In the previous paragraphs, we denoted the lower non-diagonal element of the matrix by $\star$, due to its lack of relevance when computing spectral radius of a lower triangular matrix. Now noting that
 $$
 \left(  \langle B B^* \rangle_Q - \langle B\rangle_Q \langle B^* \rangle_Q e, e \right)_{\Hil}  = \frac{1}{|Q|}\int_Q \| (B^* (x) - \langle B^* \rangle_Q)e \|^2 dx
 $$
 and
    $$
   \left(  \langle B^* B\rangle_Q - \langle B^* \rangle_Q \langle B\rangle_Q e, e \right)_{\Hil} = \frac{1}{|Q|}\int_Q \| (B(x) - \langle B \rangle_Q)e \|^2 dx.
   $$ 
This proves the claim in \eqref{a2id}. On the other hand, we also have
 \begin{multline*}
   \| T_\S\|_{L^2_{W_B} \to L^2_{W_B}} = \| V_B T_\S {V_B}^{-1}\|_{L^2(\Hil \oplus \Hil) \to L^2(\Hil \oplus \Hil) } \\
   =\left\| \left( \begin{matrix}   T_\S & [T_\S,B]    \\ 0 & T_\S \end{matrix} \right)\right\|_{L^2(\Hil \oplus \Hil) \to L^2(\Hil \oplus \Hil) .}
 \end{multline*}
According to \thref{SpL2W} and \eqref{a2id}, this is enough to conclude the proof of this Theorem. 
\end{proof}
%
%
\textbf{Acknowledgement.} 
We gratefully acknowledge support by the VR grant 2015-05552
\bibliographystyle{alpha}
\bibliography{Harref1}

\end{document}